\documentclass[11pt]{amsart}
\usepackage{amssymb,url}
\usepackage{hyperref}
\usepackage{times}
\setlength\textwidth{36pc} \setlength\textheight{53pc}
\setlength\oddsidemargin{16pt} \setlength\evensidemargin{16pt}
\raggedbottom \relpenalty=10000 \binoppenalty=10000 \tolerance=500
\mathsurround=1pt

\renewcommand{\epsilon}{\varepsilon}
\newcommand{\pa}{$\mbox{p.} \, $}

\newcommand{\gln}{\mathrm{GL}(n,\mathbb{F})}

\newcommand{\glnq}{\mathrm{GL}(n,q)}
\newcommand{\glz}{\mathrm{GL}(n,\mathbb{Z})}
\newcommand{\glq}{\mathrm{GL}(n,\mathbb{Q})}

\newcommand{\Z}{\mathbb{Z}}
\newtheorem{theorem}{Theorem}[section]
\newtheorem{lemma}[theorem]{Lemma}
\newtheorem{proposition}[theorem]{Proposition}
\newtheorem{corollary}[theorem]{Corollary}
\theoremstyle{definition}

\theoremstyle{example}
\newtheorem{example}[theorem]{Example}
\theoremstyle{remark}
\newtheorem{remark}[theorem]{Remark}

\begin{document}

\title[Computing with Nilpotent Matrix Groups 
over Infinite Domains]
{Algorithms for computing with nilpotent matrix 
groups over infinite domains}

\author{A. S. Detinko \and D. L. Flannery}

\begin{abstract}
We develop methods for computing with matrix groups defined over a 
range of infinite domains, and apply those methods to the design of 
algorithms for nilpotent groups. In particular, we provide a practical 
algorithm to test nilpotency of matrix groups over an infinite field. 
We also provide algorithms that answer a number of structural questions 
for a given nilpotent matrix group.
 The algorithms have been implemented in {\sf GAP} and {\sc Magma}.
\end{abstract}

\maketitle

\section{Introduction}

In this paper we develop a technique for computing with matrix
groups defined over infinite domains, based on changing the ground
domain via congruence homomorphism. This technique has proved to
be very efficient in linear group theory (see, e.g.,
\cite[Chapter 3]{Dixon}). It is especially useful for 
finitely generated linear groups (see \cite[Chapter
4]{Wehrfritz}), and affords a general approach to many
computational problems for infinite matrix groups. 

Let $\mathbb{F}$ be a field and let $G\leq \gln$ be given by a
finite generating set. We obtain algorithms to carry out the
following tasks:
\begin{itemize}
\item[(i)] testing nilpotency of $G$;
\end{itemize}
and, if $G$ is nilpotent,
\begin{itemize}
\item[(ii)] constructing a  polycyclic presentation of $G$;
\item[(iii)] testing whether $G$ is completely reducible, and
finding a completely reducible series of $G$-modules; 
\item[(iv)]
deciding finiteness of $G$, and calculating $|G|$ if $G$ is
finite; 
\item[(v)] finding the $p$-primary decomposition of $G$,
and finding all Sylow $p$-subgroups if $G$ is finite.
\end{itemize}
These algorithms address standard problems in computational group
theory (i, ii, iv) and computing with matrix groups (iii),
 and facilitate structural analysis of nilpotent linear
groups (v).

Our guiding objective has been to design algorithms that cover the
broadest possible range of infinite domains. However, for
convenience or reasons of practicality we sometimes
restrict $\mathbb{F}$. For example, a preliminary reduction
in nilpotency testing assumes that $\mathbb{F}$ is perfect, and
constructing polycyclic presentations requires $\mathbb{F}$ to be
finite or an algebraic number field. 
The implementation and practicality of our
algorithms relies on the machinery (such as polynomial
factorization) that is available for computing with the various
fields.

A finitely generated subgroup 
of $\gln$ lies in
$\mathrm{GL}(n,R)$ for some finitely 
generated subring $R$ of $\mathbb F$.
In turn, a completely reducible
solvable subgroup of $\mathrm{GL}(n,R)$ is finitely generated 
(\cite[Theorem 6.4, p.~111]{Dixon}, \cite[4.10, 
p.~57]{Wehrfritz}). Thus, our algorithms supply 
information not only about 
nilpotent subgroups of $\gln$, but
also about nilpotent subgroups of $\mathrm{GL}(n,R)$ for any
finitely generated integral domain $R$. Furthermore, these
algorithms serve as a platform for computing in abstract
finitely generated nilpotent groups, because such a group is
isomorphic to a subgroup of $\glz$ for some $n$. 
A method to construct a
representation over $\Z$ of a finitely presented polycyclic group
may be found in \cite{MR1716420}.

Nilpotency is an important group-theoretic property, and testing
nilpotency is consequently one of the basic functions of any
computational group theory system. We provide the first
uniform and effective solution to the problem of computing with
infinite nilpotent matrix groups. Our algorithms for nilpotency
testing (over finite fields, and $\mathbb{Q}$) have been
implemented as part of the {\sf GAP} package `Nilmat'~\cite{Nilmat} 
(this is joint work with Bettina Eick). Previous
algorithms for nilpotency testing in {\sf GAP}~\cite{Gap} and 
{\sc Magma}~\cite{Magma}
sometimes fail to decide nilpotency even for small finite matrix
groups, and fail for almost all infinite matrix groups. In the
paper's final subsection we give sample experimental results and
details of the `Nilmat' package.


This paper is a slight revision of \cite{FirstNilInf}, which was a first 
step in adapting the method of finite approximation 
as a computational tool. We have made substantial progress since
\cite{FirstNilInf}, particularly with regard to solvable-by-finite groups, 
including finite and solvable as special cases: see \cite{BigSurvey}.              
%
Also note that a
\href{http://magma.maths.usyd.edu.au/magma/handbook/matrix_groups_over_infinite_fields}{{\sc Magma} implementation}
of algorithms in this paper by Eamonn O'Brien
handles more kinds of domains than \cite{Nilmat}.

\section{Related results}

Computing in matrix groups over an infinite domain is a relatively
new area of computational group theory. Most of the algorithms in
this area are concerned with classes of solvable-by-finite groups
(see \cite{BettinaAssmann,BettinaAssmann2,Beals,Ostheimer}).
Solvable-by-finite groups constitute the more optimistic class of
the Tits alternative. The other class consists of groups
that contain a non-abelian free subgroup. For those groups, some
basic computational problems, such as membership testing and
construction of presentations, are undecidable (see
\cite{Beals,MR780623,Bettinasurvey}).

Changing the ground domain is a common technique in linear group
theory. It has been used by
several authors for computing with matrix groups; see, e.g.,
\cite{Luks}. In \cite{Beals}, a generalization of the technique as
in \cite{Luks} leads to a Monte-Carlo solvability testing
algorithm for potentially infinite subgroups $G$ of $\glq$. The
algorithm accepts as input a finite set $S$ of generators of $G$,
and tests solvability of $\psi_p(G)\leq \mathrm{GL}(n,p)$, where
$\psi_p$ is reduction modulo a prime $p$ not dividing the
denominators of the entries of the elements in $S \cup S^{-1}$. 
There are only finitely many primes
$p$ such that $\psi_p(G)$ is solvable while $G$ is not; so a
non-solvable group will be identified as solvable by the algorithm
of \cite{Beals} with small probability.

The ideas of \cite{Beals} might be applied to nilpotency testing.
However, 
 the upper bound on nilpotency
class for nilpotent subgroups of $\glnq$ can be much larger than
the bound for nilpotent subgroups of $\glq$ (see
\cite{Bialo,MR1878590}). Hence the solvability testing arguments
of \cite{Beals} may not be efficient when applied to nilpotency
testing, if one simply replaces bounds depending on derived length
by bounds depending on nilpotency class.

Let $G$ be a finitely generated matrix group  over $\mathbb{Q}$.
We can test solvability of $G$ if we can 
test solvability both of the kernel $G_p$ and of the
image $\psi_p(G)$ of a reduction mod $p$ homomorphism $\psi_p$.
Theoretical background for doing this is laid out in
\cite{MR780623}, where $G_p$ is described for solvable-by-finite
$G\leq \glq$. Using those results, a deterministic algorithm for
solvability testing was proposed in \cite{Ostheimer}.  There were
two main obstacles to a full implementation of the algorithm in
\cite{Ostheimer}: solvability testing of matrix groups over a
finite field, and efficient construction of $G_p$. Practical
solutions of these problems were obtained in
\cite{BettinaAssmann,assmann}. Specifically, \cite{BettinaAssmann}
contains a method to construct a polycyclic presentation of
$\psi_p(G)$, and thereby to test solvability of $\psi_p(G)$. The
relators of this presentation may be used to calculate generators
of a subgroup of $G$ whose normal closure is $G_p$. Although the
algorithm has bottlenecks (see \cite[\mbox{p.}
1281]{BettinaAssmann}), it has been successfully implemented for
solvability testing over finite and algebraic number fields (see
the {\sf GAP} package `Polenta'~\cite{Polenta}).

The aims of \cite{BettinaAssmann} are to test whether a
finitely generated subgroup $G$ of $\glq$ is polycyclic, and, if
so, to construct a polycyclic presentation for $G$. Those 
problems are solved in a subsequent 
publication~\cite{BettinaAssmann2}.  This provides an avenue 
for testing nilpotency of $G$: if $G$ is not polycyclic then 
it is not nilpotent; otherwise, nilpotency of $G$ can be tested 
using a polycyclic presentation of $G$ (for which see
 \cite[Section 4]{Lo}).

In this paper we propose an essentially different approach to
nilpotency testing, valid over a broad range of infinite
domains.  In contrast to \cite{BettinaAssmann2}, our algorithms do
not require {\it a priori} testing of polycyclicity and
computation of polycyclic presentations, and are designed directly
for nilpotency testing.

We use established linear group theory, chiefly
structural results for nilpotent linear groups (\cite[Chapter
VII]{Suprunenko2}, \cite{MR2232095,proccor}).  Accordingly, a
feature of our algorithms is that they return structural
information about input nilpotent groups. A full solution of the
problem of testing nilpotency over finite fields appears in
\cite{Large} (as we will see, much of \cite{Large} remains valid
over any field). Nilpotency testing is transferred to 
groups over a finite field by means of a congruence homomorphism with
torsion-free kernel; see Section~\ref{changingviacongh}. Other
methods that transfer nilpotency testing to the case of finite
groups are given in Subsection~\ref{firstuseofabseries}.

\section{Changing the ground domain via congruence homomorphism}
\label{changingviacongh}

In this section we present some results from linear group theory
that comprise the theoretical foundation of our algorithms.

First we set up some notation. Let $\Delta$ be an integral domain.
For any ideal $\varrho\subset \Delta$, the natural surjection
$\psi_\varrho: \Delta\rightarrow \Delta/\varrho$ extends entrywise
to a matrix ring homomorphism $\mathrm{Mat}(n,\Delta)\rightarrow$
$\mathrm{Mat}(n,\Delta/\varrho)$, and then restricts to a group
homomorphism $\mathrm{GL}(n,\Delta)\rightarrow
\mathrm{GL}(n,\Delta/\varrho)$, which we also denote
$\psi_\varrho$. The map $\psi_\varrho$ on $\mathrm{GL}(n,\Delta)$
is called a Minkowski or congruence homomorphism (\cite[\mbox{p.}
65]{Suprunenko2}), and its kernel is called a (principal)
congruence subgroup of $\mathrm{GL}(n,\Delta)$. We denote the
congruence subgroup corresponding to $\varrho$ by
$\mathcal{G}(n,\Delta,\varrho)$, or $\mathcal{G}_\varrho$
for short. 
If $G\leq \mathrm{GL}(n,\Delta)$ then $G_\varrho:= G\cap
\mathcal{G}_\varrho$. For an integer $m$ we write $m\in \varrho$
to mean that $m\cdot 1_{\Delta} \in \varrho$.

We are interested in $\Delta$ and ideals $\varrho$
such that $\mathcal{G}(n,\Delta,\varrho)$ is torsion-free
if $\mathrm{char} \, \Delta=0$, or each torsion element of
$\mathcal{G}(n,\Delta,\varrho)$ is unipotent if $\mathrm{char} \,
\Delta >0$. Such domains in characteristic zero are discussed in
\cite[Chapter III, Section 11]{Suprunenko2}. A slight modification
of the proofs in \cite{Suprunenko2} takes care of the positive
characteristic case. To keep the account here reasonably
self-contained, we give full proofs.

\begin{lemma}
\label{preufd} Let $\Delta$ be a unique factorization domain,
$q\in \Delta$ be irreducible, and $\varrho$ be the principal ideal
$q \Delta$ of $\Delta$.
 Suppose that $\mathcal{G}(n,\Delta,\varrho)$ has non-trivial
torsion elements. Then
\begin{itemize}
\item[{\rm (i)}] there is a unique prime $p\in \mathbb{Z}$ such
that $p\in \varrho;$ 
\item[{\rm (ii)}] 
$pb=-\sum_{i=2}^p{p \choose i} q^{i-1}b^i$ for some 
$b\in \mathrm{Mat}(n,\Delta);$ and 
\item[{\rm (iii)}] every torsion element of
$\mathcal{G}(n,\Delta,\varrho)$ has $p$-power order.
\end{itemize}
\end{lemma}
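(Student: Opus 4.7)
The plan is to reduce to a torsion element of prime order, expand its $p$-th power, and extract divisibility information from the primitive part. Start with any non-trivial torsion element of $\mathcal{G}_\varrho$; replacing it by a suitable power, assume it has prime order $r$. Since $\Delta$ is a UFD and the element lies in $\mathcal{G}_\varrho$, one can write it as $I + q^k b$ with $k \geq 1$ chosen maximal, so that $b \in \mathrm{Mat}(n,\Delta)$ has at least one entry not divisible by $q$. Expanding $(I+q^k b)^r = I$, cancelling $I$, and dividing through by $q^k$ (valid entrywise since $\Delta$ is a domain) yields
\[
rb \;=\; -\sum_{i=2}^r \binom{r}{i} q^{k(i-1)} b^i,
\]
so $rb \in q\cdot\mathrm{Mat}(n,\Delta)$. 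Picking an entry of $b$ not divisible by $q$ and using that $q$ is prime in the UFD $\Delta$, conclude $q \mid r$ in $\Delta$, i.e., $r \in \varrho$.

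This simultaneously handles (i) and (iii): it shows every prime divisor of the order of any non-trivial torsion element of $\mathcal{G}_\varrho$ lies in $\varrho$, and hence that every such element has order a power of whichever prime is in $\varrho$. For uniqueness in (i), observe that if two distinct integer primes lay in $\varrho$, then a Bezout combination would place $1 \in \varrho$, forcing $\varrho = \Delta$ and contradicting the irreducibility (hence non-unit status) of $q$. So there is exactly one such prime, call it $p$.

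For (ii), take any non-trivial torsion element $g \in \mathcal{G}_\varrho$ and raise it to the appropriate power to obtain an element of order $p$. Write $g = I + qc$ (just pulling out a single factor of $q$; no primitivity needed this time), expand $(I+qc)^p = I$, cancel $I$, and factor out $q$; since $\Delta$ is a domain the resulting bracketed expression vanishes entrywise, yielding the stated identity with $b = c$. The only delicate point in the whole argument is the primitivity step: one must keep $k$ maximal so that $b$ has an entry outside $q\Delta$, since it is precisely this that lets the equation $rb \in q\cdot\mathrm{Mat}(n,\Delta)$ translate into $q \mid r$ via primality of $q$; everything else is routine binomial expansion.
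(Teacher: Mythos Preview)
Your proof is correct and follows essentially the same approach as the paper's: both reduce to an element of prime order, write it as $I$ plus a multiple of $q$, expand via the binomial theorem, and use maximality of the $q$-power dividing the non-identity part to force $q\mid r$. The only cosmetic difference is that you extract the maximal power $q^k$ up front (so primitivity of $b$ gives $q\mid r$ directly), whereas the paper pulls out a single $q$ and then runs the same maximality argument as a contradiction; the content is identical.
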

\begin{proof}
(Cf.~\cite[proof of Theorem 3, \mbox{pp.} 68-69]{Suprunenko2}.)
Let $h\in \mathcal{G}_\varrho$ be of prime order $p$. We have
$h=1_n+ qb$ for some $b\in \mathrm{Mat}(n,\Delta)$. Then
$$1_n=h^p = 1_n+pqb+\cdots + {p \choose i} q^ib^i + \cdots +
q^pb^p
$$
where the binomial coefficients are read modulo $\mathrm{char}\,
\Delta$. Hence
\begin{equation}
\label{binom}
 pb=-\sum_{i=2}^p{p \choose i}
q^{i-1}b^i
\end{equation}
and it follows that either $q$ divides $p$, or $q$ divides every
entry of $b$.

Suppose that $q$ does not divide $p$. Then for some integer
$\alpha \geq 1$, $q^\alpha$ divides every entry of $b$, whereas
$q^{\alpha+1}$ does not. But \eqref{binom} implies that
$q^{2\alpha+1}$ divides $pb$, a contradiction. Thus $q$ divides
$p$.

If $\mathcal{G}_{\varrho}$ contains a non-trivial element of
$p'$-order then it contains an element of prime order $r\neq p$.
By the preceding, then, $q$ divides both $p$ and $r$ and hence
divides $1=px+ry$ for some $x,y\in \mathbb{Z}$.
Since $q$ is not a unit by definition,
every torsion element of $\mathcal{G}_{\varrho}$ must be
 a $p$-element.
\end{proof}

\begin{proposition}
\label{ufdcase} Let $\Delta$, $q$, and $\varrho$ be as in
Lemma{\em ~\ref{preufd}}.
\begin{itemize}
\item[{\rm (i)}] If $\mathrm{char}\, \Delta=t>0$ then every
 torsion element of
$\mathcal{G}(n, \Delta, \varrho)$ is a $t$-element. \item[{\rm
(ii)}] Suppose that $\mathrm{char}\, \Delta=0$, $q$ does not
divide $2$, and $q^2$ does not divide $p$ for any prime $p\in
\mathbb{Z}$. Then $\mathcal{G}(n , \Delta, \varrho)$ is
torsion-free.
\end{itemize}
\end{proposition}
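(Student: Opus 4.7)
The plan is to deduce both parts from Lemma~\ref{preufd}, which, under the assumption that $\mathcal{G}_\varrho$ carries non-trivial torsion, yields a unique integer prime $p \in \varrho$, the binomial identity~\eqref{binom}, and the fact that every torsion element has $p$-power order.

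For (i), since $\Delta$ is an integral domain of characteristic $t > 0$, $t$ is prime in $\mathbb{Z}$ and $t \cdot 1_\Delta = 0$ lies in $\varrho$. If $\mathcal{G}_\varrho$ is torsion-free there is nothing to show; otherwise the uniqueness clause of Lemma~\ref{preufd}(i) forces the prime supplied there to equal $t$, and Lemma~\ref{preufd}(iii) finishes the job.

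For (ii), I would argue by contradiction. Assuming non-trivial torsion in $\mathcal{G}_\varrho$, Lemma~\ref{preufd} furnishes a prime $p \in \varrho$ (so $q \mid p$) and a non-identity element $h = 1_n + qb$ of order $p$ satisfying~\eqref{binom}. The hypothesis $q \nmid 2$ rules out $p = 2$, so $p$ is an odd prime, and together with $q^2 \nmid p$ this gives $v_q(p) = 1$, where $v_q$ denotes the $q$-adic valuation (well-defined since $\Delta$ is a UFD). Setting $\alpha = \min_{i,j} v_q(b_{ij}) \geq 0$, some entry of the left-hand side $pb$ has valuation exactly $1 + \alpha$. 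On the right-hand side, the $i = 2$ term $\binom{p}{2} q b^2 = \tfrac{p(p-1)}{2} q b^2$ has every entry of valuation at least $2 + 2\alpha$, because $v_q(\binom{p}{2}) = v_q(p) + v_q(p-1) - v_q(2) = 1$ (using $q \nmid p-1$ and $q \nmid 2$); and for $3 \leq i \leq p$ the standard divisibility $p \mid \binom{p}{i}$ (for $i < p$) together with the factor $q^{i-1}$ forces each further term to valuation at least $2 + 2\alpha$ as well. Consequently the right-hand side has every entry of valuation $\geq 2 + 2\alpha > 1 + \alpha$, contradicting equation~\eqref{binom}.

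The delicate step is the valuation count on the $i = 2$ term: it is the only place where the hypotheses $q \nmid 2$ and $q^2 \nmid p$ are both simultaneously needed, while all other terms enjoy a generous safety margin supplied by $q^{i-1}$ and by $p \mid \binom{p}{i}$. Once this bookkeeping is pinned down, the contradiction is immediate and the proposition follows.
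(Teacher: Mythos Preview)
Your proof is correct and follows essentially the same route as the paper. Both arguments invoke Lemma~\ref{preufd} to obtain the binomial identity~\eqref{binom} with $q\mid p$ and $p$ odd, and then derive a contradiction by comparing $q$-adic valuations of the two sides. The only difference is cosmetic: the paper writes $p=qr$ with $q\nmid r$, factors the right-hand side of~\eqref{binom} as $q^2b^2c$ in one stroke, and runs the valuation argument on the resulting equation $rb=qb^2c$, whereas you estimate each summand $\binom{p}{i}q^{i-1}b^i$ separately. Your bound $v_q\bigl(\binom{p}{2}\bigr)\geq 1$ is all that is actually needed for that term (the exact value $1$ is not required), so the closing remark about where the two hypotheses are ``simultaneously needed'' is slightly off: $q\nmid 2$ is used to force $p$ odd (hence $p\geq 3$, needed for the $i=p$ term), while $q^2\nmid p$ is used only to pin down $v_q(p)=1$ on the left-hand side. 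This does not affect the validity of the argument.
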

\begin{proof}
(Cf.~\cite[\mbox{pp.} 68-69]{Suprunenko2}.)

(i) \hspace{2pt} 
 This follows from parts (i) and (iii) of Lemma~\ref{preufd}.

(ii) \hspace{1pt} If $\mathcal{G}_\varrho$ has non-trivial
torsion then $p=qr$  for some odd prime $p$ and $r\in \Delta$ not
divisible by $q$. By Lemma~\ref{preufd} (ii), for some $b, c\in
\mathrm{Mat}(n,\Delta)$ we have $qrb=q^2b^2c$. Hence $q^{\alpha}$
divides every entry of $b$ for some $\alpha \geq 1$ such that
$q^{\alpha+1}$ does not divide every entry of $b$. As $q$ does not
divide $r$, $rb=qb^2c$ yields the contradiction that $q^{2\alpha
+1}$ divides every entry of $b$.
\end{proof}

The next result mimics Lemma~\ref{preufd}.
\begin{lemma}
\label{prededekind} Let $\Delta$ be a Dedekind domain, and let
$\varrho$ be a proper prime ideal (i.e., maximal ideal) of
$\Delta$. Suppose that $\mathcal{G}(n,\Delta, \varrho)$ has
non-trivial torsion elements. Then
\begin{itemize}
\item[{\rm (i)}] there is a unique prime $p\in \mathbb{Z}$ such
that $p\in \varrho;$ \item[{\rm (ii)}] for some $b\in
\mathrm{Mat}(n,\varrho)$, $pb_{j,k}=-\sum_{i=2}^p{p \choose i}
b_{j,k}^{(i)}$, where $b_{j,k}^{(i)}$ denotes the $(j,k)$th entry
of $b^{i};$ and \item[{\rm (iii)}] every torsion element of
$\mathcal{G}(n,\Delta, \varrho)$ has $p$-power order.
\end{itemize}
\end{lemma}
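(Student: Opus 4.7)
The plan is to follow the argument of Lemma~\ref{preufd} closely, replacing ``divisibility by the principal generator $q$'' with ideal-theoretic membership and invoking invertibility of nonzero ideals in a Dedekind domain in place of unique factorization of elements in a UFD.

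I would begin by taking $h \in \mathcal{G}_\varrho$ of prime order $p$ and writing $h = 1_n + b$ with $b \in \mathrm{Mat}(n,\varrho)$; this is available since $\mathcal{G}_\varrho$ is precisely the kernel of entrywise reduction modulo $\varrho$. Expanding $h^p = 1_n$ via the binomial theorem (with binomial coefficients interpreted in $\Delta$) and reading off the $(j,k)$-entry immediately yields part~(ii).

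For part~(i), let $I_b$ denote the ideal of $\Delta$ generated by the entries of $b$. Since $h \neq 1_n$, $I_b$ is nonzero, and $I_b \subseteq \varrho$ because $b \in \mathrm{Mat}(n,\varrho)$. The $(j,k)$-entry of $b^i$ is a sum of products of $i$ entries of $b$, hence lies in $I_b^i$; so part~(ii) gives $(p)\, I_b \subseteq I_b^2$. Since $I_b$ is a nonzero ideal in a Dedekind domain it is invertible, and multiplying by $I_b^{-1}$ yields $(p) \subseteq I_b \subseteq \varrho$, i.e.\ $p \in \varrho$. Uniqueness in~(i) now follows exactly as in Lemma~\ref{preufd}: two distinct primes $p, r \in \varrho$ would admit a $\mathbb{Z}$-linear combination equal to $1$, contradicting $\varrho \neq \Delta$. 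Part~(iii) follows by the same reduction as before: a torsion element of $\mathcal{G}_\varrho$ of order coprime to $p$ yields, upon raising to a suitable power, an element of prime order $r \neq p$, which forces $r \in \varrho$ by~(i) and contradicts uniqueness.

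The main obstacle lies in the step from $(p)\, I_b \subseteq I_b^2$ to $p \in \varrho$. In Lemma~\ref{preufd} one can choose the largest power of $q$ dividing every entry of $b$ and then force $q \mid p$ by an elementwise comparison; here, the individual entries of $b$ may lie in different powers of $\varrho$, so such elementwise divisibility arguments do not translate directly. Aggregating the entries into the ideal $I_b$ and using invertibility of nonzero ideals in the Dedekind domain $\Delta$ is what makes the argument go through; the remainder of the proof is a routine transcription of the UFD case.
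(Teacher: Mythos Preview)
Your proof is correct, and the overall strategy matches the paper's: take $h=1_n+b$ with $b\in\mathrm{Mat}(n,\varrho)$, expand $h^p=1_n$ to obtain (ii), then use the Dedekind hypothesis to force $p\in\varrho$, with uniqueness in (i) and part (iii) following by B\'{e}zout exactly as you say.

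The one point of difference is in how the Dedekind property is invoked for the key step $p\in\varrho$. The paper works entrywise: it fixes the largest integer $l$ with every $b_{j,k}\in\varrho^l$ (the existence of such $l$ being justified via the primary decomposition of the ideal generated by the entries of $b$), observes from (ii) that $pb_{j,k}\in\varrho^{2l}$, and then, assuming $p\notin\varrho$, writes $1=px+y$ with $y\in\varrho$ to conclude $b_{j,k}=pb_{j,k}x+b_{j,k}y\in\varrho^{l+1}$, a contradiction. Your route---deduce $(p)I_b\subseteq I_b^2$ from (ii) and cancel the invertible ideal $I_b$---is a cleaner, more global packaging of the same idea: it replaces the $\varrho$-adic valuation bookkeeping and the B\'{e}zout-identity contradiction by a single ideal cancellation. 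Both arguments ultimately rest on the same feature of Dedekind domains (invertibility/unique factorization of nonzero ideals), so this is a variation in execution rather than a genuinely different method.
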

\begin{proof}
(Cf.~ the proof of \cite[Theorem 4, p.~70]{Suprunenko2}.) If
$h\in \mathcal{G}_{\varrho}$ 
has prime order $p$ then
$$
pb+\cdots + {p \choose i}b^i + \cdots + b^p =0_n
$$
for some $b\in \mathrm{Mat}(n,\varrho)$, reading the binomial
coefficients modulo $\mathrm{char}\, \Delta$. Now (ii) is clear.

Let $l\geq 1$ be the integer such that $b_{j,k}\in \varrho^l$ for
all $j,k$, but $b_{r,s}\not \in \varrho^{l+1}$ for some $r,s$.
(Such an integer $l$ exists, because the
ideal $I$ of $\Delta$ generated by the entries of $b$ is contained
in $\varrho$, $I=\allowbreak \varrho J$ where $J$ is the ideal
$\varrho^{-1}I$, and $J$ has a maximal power of $\varrho$ in its
primary decomposition.) Then (ii) and $b_{j,k}^{(i)} \in
\varrho^{il}$ imply that $pb_{j,k}\in \varrho^{2l}$. Suppose that
$p\not \in \varrho$. Since $\varrho$ is a maximal ideal,
 $\Delta$ is generated by $p$ and $\varrho$.
Let $x \in \Delta,\, y\in\varrho$ be such that $px+y=1$. Then
$b_{j,k}=\allowbreak pb_{j,k}x+\allowbreak b_{j,k}y\in
\varrho^{2l}+\varrho^l\varrho\subseteq \varrho^{l+1}$, a
contradiction. Thus $p\in \varrho$. Moreover, $p$ is the unique
prime integer in $\varrho$ (otherwise $1\in \varrho$
by B\'{e}zout's lemma), so that every torsion element of
$\mathcal{G}_\varrho$ is a $p$-element.
\end{proof}
\begin{proposition}
\label{dedekindinallchar} Let $\Delta$ and $\varrho$ be as in
Lemma{\em ~\ref{prededekind}}.
\begin{itemize}
\item[{\rm (i)}] If $\mathrm{char}\, \Delta=t>0$ then every
torsion element of $\mathcal{G}(n,\Delta, \varrho)$ is a
$t$-element. \item[{\rm (ii)}] If $\mathrm{char}\, \Delta=0$,
$2\not \in \varrho$ and $p\not \in \varrho^2$ for all primes $p\in
\mathbb{Z}$, then $\mathcal{G}(n,\Delta, \varrho)$ is
torsion-free.
\end{itemize}
\end{proposition}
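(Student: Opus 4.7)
The strategy is to adapt the proof of Proposition~\ref{ufdcase} to the Dedekind setting, replacing ``powers of the irreducible $q$ divide'' statements by ``lies in a power of $\varrho$'' statements, which are governed by the $\varrho$-adic valuation $v_\varrho$ in the localisation $\Delta_\varrho$ (a DVR, since $\varrho$ is maximal). Part~(i) is then immediate from Lemma~\ref{prededekind}: if $\mathcal{G}_\varrho$ has a non-trivial torsion element, Lemma~\ref{prededekind}(i) provides a unique prime $p \in \varrho \cap \mathbb{Z}$; since $t \cdot 1_\Delta = 0 \in \varrho$ forces $p = t$, Lemma~\ref{prededekind}(iii) gives the claim.

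For part~(ii) I argue by contradiction. Assuming non-trivial torsion, Lemma~\ref{prededekind} supplies a prime $p \in \varrho$ and a matrix $b \in \mathrm{Mat}(n,\varrho)$ satisfying the binomial identity in~(ii). The hypothesis $2 \not\in \varrho$ rules out $p = 2$, so $p$ is odd and $p \mid \binom{p}{i}$ for $2 \leq i \leq p-1$; the hypothesis $p \not\in \varrho^2$ gives $v_\varrho(p) = 1$. Let $l \geq 1$ be the integer (from the proof of Lemma~\ref{prededekind}) with $b_{j,k} \in \varrho^l$ for all $j,k$ but $b_{r,s} \not\in \varrho^{l+1}$ for some $r,s$; then each entry $b_{j,k}^{(i)}$ of $b^i$, being a sum of products of $i$ entries of $b$, lies in $\varrho^{il}$. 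In the identity
$$p b_{j,k} = -\sum_{i=2}^{p} \binom{p}{i} b_{j,k}^{(i)},$$
the terms with $2 \leq i \leq p-1$ lie in $p \cdot \varrho^{2l} \subseteq \varrho^{2l+1}$, and the term $i = p$ lies in $\varrho^{pl} \subseteq \varrho^{2l+1}$ (using $(p-2)l \geq 1$ for $p \geq 3$, $l \geq 1$). Hence $p b_{j,k} \in \varrho^{2l+1}$, and $v_\varrho(p) = 1$ then forces $b_{j,k} \in \varrho^{2l} \subseteq \varrho^{l+1}$, contradicting the choice of $l$.

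The main conceptual hurdle is transporting the UFD ``power of $q$ that divides'' count into the Dedekind world, where divisibility is measured by ideal powers; once this is formalised via the valuation $v_\varrho$ on $\Delta_\varrho$, the arithmetic parallels Proposition~\ref{ufdcase}(ii). The two hypotheses play exactly the expected roles: $2 \not\in \varrho$ excludes the degenerate case $p=2$, where the sum reduces to the single term $\binom{2}{2}b_{j,k}^{(2)} = b_{j,k}^{(2)}$ and so fails to supply an extra power of $\varrho$; and $p \not\in \varrho^2$ ensures $v_\varrho(p) = 1$, so that dividing through by $p$ costs only one power of $\varrho$.
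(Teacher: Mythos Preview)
Your proof is correct and follows essentially the same line as the paper's: both derive $p\,b_{j,k}\in\varrho^{2l+1}$ from the binomial identity of Lemma~\ref{prededekind}(ii) (splitting off the $i=p$ term, which needs $p$ odd) and then contradict $b_{r,s}\notin\varrho^{l+1}$ using $p\notin\varrho^2$. The only cosmetic difference is that you phrase the final step via the $\varrho$-adic valuation on the DVR $\Delta_\varrho$, whereas the paper uses the primary decomposition of the ideal $p\Delta\cdot b\Delta$; these are equivalent formulations.
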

\begin{proof}
(Cf.~\cite[\mbox{p.} 70]{Suprunenko2}.)

(i) \,  This  follows at once from Lemma~\ref{prededekind} (i) and
(iii).

(ii) \,  If $\mathcal{G}_\varrho$ has non-trivial torsion then
$\mathcal{G}_{\varrho}$ has elements of $p$-power order, where
$p\in \varrho$ for an odd prime $p$. By Lemma~\ref{prededekind}
(ii), there exist an element $b$ of $\Delta$ and an integer $l$
such that $b\in \varrho^{l}\setminus \varrho^{l+1}$ (so that
$\varrho^l$ is the largest power of $\varrho$ appearing in the
primary decomposition of the ideal $b\Delta$) and $pb\in
\varrho^{2l+1}$. Certainly then $pb \in \varrho^{l+2}$.

We now derive a contradiction. First, $pb\in \varrho^{l+2}$
implies that $p\Delta \cdot b \Delta\subseteq pb\Delta\subseteq
\varrho^{l+2}$, so $\varrho^{l+2}$ appears in the primary
decomposition of $p\Delta \cdot b \Delta$. But since $p\in \varrho
\setminus \varrho^2$, we know that $\varrho^{l+1}$ is the largest
power of $\varrho$ appearing in this decomposition. 
\end{proof}

To round out this section, we look briefly at how congruence
homomorphisms may be applied in practice to finitely generated
matrix groups. The congruence image should be a matrix
group for which solutions to the specific problems are known (for
example, the image is over a finite field), and the congruence
kernel should be either torsion-free or 
consist of unipotent elements.

Let $\mathbb{F}$ be the field of fractions of the integral domain
$\Delta$, and let $R$ be a finitely generated subring of
$\mathbb{F}$. In particular, if $G=\langle g_1,\ldots , g_r
\rangle\leq \gln$ then $R=R(G)$ denotes the ring generated by the
entries of the elements of  $\{ g_i, g_i^{-1} \mid 1\leq i \leq
r\}$. Obviously $G \leq$ $\mathrm{GL}(n,R(G))$.
 
Let $\pi \subseteq \Delta$ be the set of denominators of the 
generators in a finite generating set of $R$. Denote by
$\Delta_\pi$ the ring of fractions with denominators in the
submonoid of $\Delta^{\times}$ generated by $\pi$ (\cite[\pa
311]{Cohn}). Of course, $R \subseteq \Delta_\pi$. If $\Delta$ is a
UFD or Dedekind domain then $\Delta_\pi$ is a UFD or Dedekind
domain, respectively (\cite[Theorem 3.7, \pa 315]{Cohn} and
\cite[Corollary 5.2, \pa 322]{Cohn}). Since the quotient of a
finitely generated commutative ring by a maximal ideal is a finite
field (\cite[4.1, \pa 50]{Wehrfritz}), if $\Delta$ is finitely
generated and $\varrho$ is a maximal ideal of $\Delta_\pi$ then
$\Delta_\pi/\varrho$ is a finite field. Thus, if $G\leq$ $\gln$
then $\psi_\varrho: \mathrm{GL}(n,\Delta_\pi) \rightarrow
\mathrm{GL}(n,\Delta_\pi/\varrho)$ maps $G$ into some $\glnq$.

We give two examples to illustrate the above that 
are of computational interest.

\begin{example}
\label{ex(a)} {\em  Let $\mathbb{F}$ be an algebraic number field,
and let $\Delta$ be the ring of integers of $\mathbb{F}$. Since
$\Delta$ is finitely generated, $\Delta_\pi$ is a finitely
generated Dedekind domain. Let $\varrho$ be a maximal 
(proper prime) ideal of $\Delta_\pi$ not containing 2, such that $p
\not \in \varrho^2$ for all primes $p\in \mathbb{Z}$; then
$\mathcal{G}(n, \Delta_\pi, \varrho)$ is torsion-free by
Proposition~\ref{dedekindinallchar}, and $\Delta_\pi/\varrho$ is a
finite field. If $\mathbb{F}=\mathbb{Q}$, say, then
$\Delta=\mathbb{Z}$, and if we choose an odd prime
$p\in\mathbb{Z}$ which does not divide any element of $\pi$ then
$\varrho= p\Delta_\pi$ is as required. In this case
$\Delta_\pi/\varrho = \mathrm{GF}(p)$.

For number fields $\mathbb{F}$ in general, to find $\varrho$ we
can reduce to $\mathbb{Q}$ after fixing a $\mathbb{Q}$-basis of
$\mathbb{F}$; this however has the disadvantage of blowing up the
size of matrices. Alternatively we proceed as follows. Suppose
that $\mathbb{F}=\mathbb{Q}(\alpha)$ contains all generators of
$R$, where $\alpha$ is an algebraic integer. Let $m$ be the degree
of the minimal polynomial of $\alpha$. Expressing each generator
of $R$ uniquely as a $\mathbb{Q}$-linear combination of $\{ 1,
\alpha, \ldots , \alpha^{m-1}\}$, and thereafter obtaining each
generator in the form $\beta/z$ where $\beta$ is an algebraic
integer and $z\in \mathbb{Z}$, we can find $\pi \subseteq$
$\mathbb{Z}$. If $p\in \mathbb{Z}$ is an odd prime element of
$\Delta$ not dividing any element of $\pi$ then
$\varrho=p\Delta_\pi$ is a maximal ideal of $\Delta_\pi$ such that
$\Delta_\pi/\varrho = \mathrm{GF}(p^l)$ for some $l\leq m$, and
$\mathcal{G}(n,\Delta,\varrho)$ is torsion-free. Note that 
$R\subseteq\mathbb{Z}_{\pi}[\alpha ]$. So the reduction mod $p$ congruence
homomorphism on a finitely generated subgroup $G$ of $\gln$ with
$R=R(G)$ is easily described. To evaluate $\psi_\varrho$,
we reduce elements of $\mathbb{Z}_\pi$ mod $p$, and if $f(X)\in
\mathbb{Z}[X]$ is the minimal polynomial of $\alpha$ then
$\psi_\varrho(\alpha)$ is a root of the mod $p$-reduction
$\bar{f}(X)$ of $f(X)$. If $\bar{f}(X)$ is irreducible over
$\mathrm{GF}(p)$ then $l=m$; otherwise $l<m$.}
\end{example}
\begin{example}
\label{ex(b)} {\em  Let $\mathbb{F}$ be a function field $P(X)$,
and let $\Delta$ be the polynomial ring $P[X]$, where $P$ is a
UFD. Then $\Delta$ is a UFD (\cite[\pa 316]{Cohn}), and therefore
so too is $\Delta_\pi$. Let $q=X-\alpha$, where $\alpha$ is not a
root of any element of $\pi$. (If $P$ is infinite then of course
$\alpha$ always exists in $P$; else we can replace the finite
field $P$ by a finite extension containing $\alpha$.) Then
$\varrho = q \Delta_\pi$ is a prime ideal of $\Delta_\pi$. By
Proposition~\ref{ufdcase}, either
$\mathcal{G}(n,\Delta_\pi,\varrho)$ is torsion-free, or every
torsion element of $\mathcal{G}(n,\Delta_\pi, \varrho)$ is
unipotent. The effect of $\psi_\varrho$ is just substitution of
$\alpha$ for the indeterminate $X$ in elements of $\Delta_\pi$.
Hence $\psi_\varrho (\Delta_\pi)$ can be viewed as a subring of
$P$. If $P$ is finite then $\psi_\varrho (\Delta_\pi)$ is also a
finite field. When $P$ has characteristic zero we apply a
suitable congruence homomorphism over the finitely generated
integral domain $\psi_\varrho (\Delta_\pi)\subseteq P$, in line
with the following simple observation: if $\psi_{\varrho_1}:
\Delta \rightarrow \Delta/\varrho_1$ and
$\psi_{\varrho_2/\varrho_1}:\Delta/\varrho_1 \rightarrow
\Delta/\varrho_2$ are (natural) homomorphisms of integral domains
such that $\mathcal{G}(n,\Delta, \varrho_1)$ and $\mathcal{G}(n,
\Delta/\varrho_1, \varrho_2/\varrho_1)$ are both torsion-free,
then $\mathcal{G}(n,\Delta, \varrho_2)$ is torsion-free. 
Say $P=\mathbb{Q}$; then $\psi_\varrho (
\Delta_\pi)\subseteq \mathbb{Z}_{\pi_1}$ for some finite subset
$\pi_1$ of $\mathbb{Z}\setminus \{ 0 \}$, and we are back to the
situation of Example~\ref{ex(a)}.}
\end{example}

\section{Computing with nilpotent matrix groups}

In this section we design algorithms for
computing with matrix groups over a field $\mathbb{F}$, as set out
in the introduction. We are guided 
by the
algorithms and results in \cite{Large}. Although only finite
$\mathbb{F}$ were treated in \cite{Large}, most of that
paper's fundamental results are valid over any $\mathbb{F}$.

\subsection{Splitting nilpotent linear groups}
\label{splitsect} In linear group theory, we
often reduce problems to the completely reducible case. This
reduction is more straightforward for nilpotent linear groups than
it is for arbitrary linear groups (see, e.g.,
\cite[Subsection~2.1]{MR2232095}). In this subsection we consider
a computational approach to the reduction.

Our starting point is the Jordan decomposition. Recall that $h\in
\gln$ is \emph{diagonalizable} if $h$ is conjugate to a
diagonal matrix over some extension of $\mathbb{F}$, and $h$
is \emph{semisimple} if $\langle h\rangle \leq \gln$ is completely
reducible. A semisimple element of $\gln$ need not be
diagonalizable, unless $\mathbb{F}$ is perfect: then the two
concepts coincide. Denote the algebraic closure of $\mathbb{F}$ by
$\overline{\mathbb{F}}$. For each $g \in\allowbreak \gln$, there is a unique
unipotent matrix $g_u\in\mathrm{GL}(n,\overline{\mathbb{F}}
\hspace*{.1mm} )$ and a unique diagonalizable matrix $g_s\in
\mathrm{GL}(n,\overline{\mathbb{F}} \hspace*{.1mm} )$ such that
$g=$ $g_sg_u=g_ug_s$ (see \cite[7.2, p.~91]{Wehrfritz}).
Note that this Jordan decomposition of $g$ 
is the same over 
every extension of
$\mathbb{F}$. If $\mathbb{F}$ is perfect then by \cite[Proposition~1, 
p.~134]{Segal}, $g_u$ and $g_s$ are in $\gln$.

An algorithm to compute the Jordan decomposition can be found in
\cite[Appendix A]{Babaietal}. Systems such as {\sf GAP} also
have standard functions for computing the decomposition.

Let $G=\langle
g_1, \ldots, g_r\rangle \leq \gln$.  Define
$$
G_u=\langle (g_1)_u, \ldots, (g_r)_u\rangle \qquad \mbox{and}
\qquad G_{s} = \langle (g_1)_s, \ldots, (g_r)_s\rangle .
$$
Since $g_i=$ $(g_i)_u (g_i)_s \in \langle G_u,G_{s} \rangle$,
clearly $G\leq G^*:= \langle G_u, G_{s}\rangle$. In general,
neither $G_u$ nor $G_s$ are necessarily subgroups of $G$.
\begin{lemma}
\label{corsix}
\begin{itemize}
\item[{\rm (i)}] $G$ is nilpotent if and only if $G_u$, $G_{s}$
are nilpotent and $[G_u, G_{s} ]=1$. \item[{\rm (ii)}] If $G$ is
nilpotent then $G\leq G^*= G_u\times G_s$.
\end{itemize}
\end{lemma}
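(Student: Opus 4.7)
The plan is to handle (i) in two directions and then deduce (ii). For the \emph{if} direction of (i), assume $G_u$ and $G_s$ are nilpotent and $[G_u,G_s]=1$: then $G^{*}=\langle G_u,G_s\rangle = G_uG_s$ (the product is already a subgroup because the factors commute elementwise), so $G^{*}$ is a central product of two nilpotent groups and hence nilpotent. Since $G\le G^{*}$, $G$ is nilpotent. This is routine; the real work is the converse of (i) and the refinement in (ii).

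For the \emph{only if} direction, I would pass to the Zariski closure $\overline{G}$ of $G$ in $\mathrm{GL}(n,\overline{\mathbb{F}})$. Nilpotency of class $c$ is the vanishing on $G^{c+1}$ of the iterated commutator $\gamma_{c+1}$, a morphism of algebraic varieties; its preimage of the identity is closed, so Zariski closure preserves the nilpotency class and $\overline{G}$ is a nilpotent linear algebraic group. Because the Jordan components $g_s,g_u$ of any $g\in\gln$ are polynomials in $g$, every $(g_i)_s$ and $(g_i)_u$ lies in $\overline{G}$; hence $G_s$ and $G_u$ are subgroups of $\overline{G}$, and so are themselves nilpotent. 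It then remains to show $[G_s,G_u]=1$, which reduces to showing that $(g_i)_s$ commutes with $(g_j)_u$ for all $i,j$. For this I would invoke the standard structural fact that any nilpotent linear algebraic group $H$ decomposes as a direct product $H=H_s\times H_u$, where $H_s$ is the (abelian) subgroup of semisimple elements, contained in a maximal torus, and $H_u$ is the unipotent radical; see for example \cite[Chapter VII]{Suprunenko2}. Applied to $H=\overline{G}$, this immediately yields $[(g_i)_s,(g_j)_u]=1$ and hence $[G_s,G_u]=1$.

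For (ii), part (i) already supplies $G^{*}=G_uG_s$ with $[G_u,G_s]=1$, so only $G_u\cap G_s=1$ remains. The same structural decomposition places $G_s$ inside $\overline{G}_s$ (so every element of $G_s$ is actually semisimple) and $G_u$ inside $\overline{G}_u$ (so every element of $G_u$ is actually unipotent); since the identity matrix is the only element that is both semisimple and unipotent, $G_u\cap G_s=1$, giving $G^{*}=G_u\times G_s$. The main obstacle, used twice above, is the structural decomposition $H=H_s\times H_u$ for nilpotent linear algebraic groups---classical but genuinely nontrivial (especially in the possibly disconnected case, and without assuming $\mathbb{F}$ perfect, where one must work inside $\mathrm{GL}(n,\overline{\mathbb{F}})$ throughout); everything else in the argument is straightforward bookkeeping with the Jordan decomposition.
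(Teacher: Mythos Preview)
Your argument is correct. The paper's proof is considerably terser: for the ``only if'' direction it simply cites \cite[Proposition~3, p.~136]{Segal}, which asserts directly that when $G$ is nilpotent the maps $g\mapsto g_u$ and $g\mapsto g_s$ are group homomorphisms on $G$; from this one reads off that $G_u$ and $G_s$ are homomorphic images of $G$ (hence nilpotent), that $[G_u,G_s]=1$ (compare $(gh)_s(gh)_u$ with $g_sg_uh_sh_u$), and that $G_u\cap G_s=1$ (every element of $G_u$ is unipotent, every element of $G_s$ is semisimple). Your route---pass to the Zariski closure $\overline{G}$, observe that nilpotency is a closed condition, and then invoke the structure theorem $H=H_s\times H_u$ for nilpotent linear algebraic groups---is a legitimate and more self-contained unpacking of essentially the same underlying fact; indeed Segal's proposition is proved by an argument of this flavour. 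What your approach buys is transparency about \emph{why} the Jordan maps are homomorphisms; what the paper's citation buys is brevity and the avoidance of algebraic-group machinery (in particular the disconnected case of $H=H_s\times H_u$, which you rightly flag as needing care). A minor remark: the reference you give (\cite[Chapter~VII]{Suprunenko2}) treats nilpotent linear groups rather than algebraic groups as such; the standard citation for the decomposition of nilpotent algebraic groups would be Borel or Humphreys, or alternatively \cite[7.11]{Wehrfritz} as used elsewhere in the paper.
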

\begin{proof}
If $G$ is nilpotent then the assignments $g \mapsto g_u$ and $g
\mapsto g_s$ define homomorphisms $G\rightarrow$ $G_u$ and $G
\rightarrow G_s$; furthermore $G^*=G_u \times G_s$ (see
\cite[Proposition 3, \pa 136]{Segal}). On the other hand, if
$G_u$, $G_{s}$ are nilpotent and $[G_u, G_{\!s}]=1$, then $G^*$
and thus $G\leq G^*$ are nilpotent.
\end{proof}
\begin{remark}
Let $G$ be nilpotent. Then $G_u=\{ g_u \mid g\in G\}$ and $G_s=\{
g_s \mid g\in G\}$. Sometimes $G=G_u\times G_s$. For
example, this is true if $\mathbb{F}$ is finite. As another
example, if $G$ is an algebraic group (over algebraically closed
$\mathbb{F}$) then $g_u, g_s\in G$ for all $g\in G$, so that $G=
G^*$.
\end{remark}
\begin{lemma}
\label{lopez} $G_u$ is nilpotent if and only if it is unipotent,
i.e., conjugate to a subgroup of the group
$\mathrm{UT}(n,\mathbb{F})$ of all upper unitriangular matrices
over $\mathbb{F}$.
\end{lemma}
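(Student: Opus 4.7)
The \emph{if} direction is immediate: the group $\mathrm{UT}(n,\mathbb{F})$ of upper unitriangular matrices is nilpotent of class at most $n-1$, so any subgroup of any conjugate of it, $G_u$ included, is nilpotent.

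For the \emph{only if} direction my plan is to argue in two steps: first show that every element of $G_u$ is unipotent, then invoke Kolchin's theorem. Each generator $(g_i)_u$ of $G_u$ is by construction unipotent, so its own Jordan decomposition is trivial: $((g_i)_u)_s = 1_n$ and $((g_i)_u)_u = (g_i)_u$. Assuming $G_u$ is nilpotent, apply the result of Segal cited in the proof of Lemma~\ref{corsix}, but now with $G_u$ itself in the role of the ambient nilpotent linear group: the assignment $h\mapsto h_s$ then defines a group homomorphism on $G_u$. Since it kills every generator of $G_u$, it kills the whole group, so $h_s=1_n$ and $h=h_u$ is unipotent for every $h\in G_u$. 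With every element of $G_u$ known to be unipotent, Kolchin's theorem supplies an element of $\mathrm{GL}(n,\mathbb{F})$ conjugating $G_u$ into $\mathrm{UT}(n,\mathbb{F})$ (the conjugating element can be taken over $\mathbb{F}$ rather than $\overline{\mathbb{F}}$ because the common invariant flag produced by Kolchin is cut out by the kernels of $h-1_n$ for $h\in G_u$, all of which are $\mathbb{F}$-subspaces).

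The main point I expect to check carefully is that the Segal/Jordan-decomposition machinery of Lemma~\ref{corsix} really does apply to $G_u$ itself and gives $h\mapsto h_s$ as a \emph{homomorphism} on $G_u$; this is the step that converts the hypothesis ``$G_u$ nilpotent'' into the pointwise statement ``every element of $G_u$ is unipotent,'' and once that is in hand, Kolchin's theorem finishes the argument in one line. A minor implicit assumption is that $G_u$ genuinely sits inside $\mathrm{GL}(n,\mathbb{F})$ (as opposed to $\mathrm{GL}(n,\overline{\mathbb{F}}\hspace*{.1mm})$); this is fine under the perfectness hypothesis on $\mathbb{F}$ that the paper flags in the introduction as a preliminary reduction for nilpotency testing.
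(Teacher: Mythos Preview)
Your proposal is correct and follows essentially the same route as the paper. Both directions match: the paper cites that $\mathrm{UT}(n,\mathbb{F})$ is nilpotent for the \emph{if} direction, and for the \emph{only if} direction it asserts (in compressed form) that a nilpotent $G_u$ consists entirely of unipotent elements and then invokes the unitriangularizability of unipotent groups via \cite[1.21, p.~14]{Wehrfritz}---which is exactly your Kolchin step. Your explicit use of the Segal homomorphism $h\mapsto h_s$ on $G_u$ to deduce that every element is unipotent simply spells out what the paper leaves implicit, and your remark about the conjugating matrix living over $\mathbb{F}$ (via the $\mathbb{F}$-rational flag) is a helpful clarification the paper omits.
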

\begin{proof}
A unipotent group is unitriagularizable (see \cite[1.21, \mbox{p.}
14]{Wehrfritz}). If $G_u$ is nilpotent then $G_u=\{g_u\mid g\in
G\}$ is unipotent. As is well-known, $\mathrm{UT}(n,\mathbb{F})$
is nilpotent, of class $n-1$.
\end{proof}

In \cite[Subsection 2.1]{Large}, a recursive procedure is given
for deciding whether a group generated by unipotent matrices (over
any field $\mathbb{F}$) is unipotent. We label that procedure
${\tt IsUnipotent}$ here.

\bigskip

\medskip

\hspace*{-1.5mm} ${\tt IsUnipotent}(H)$

\vspace*{1.5mm}

Input: $H=\langle h_1, \ldots , h_r\rangle$, $h_i\in
\mathrm{GL}(n,\mathbb{F})$ unipotent, $\mathbb{F}$ any field.

Output: a $\mathrm{UT}(n,\mathbb{F})$-representation of $H$, or a
message `false' meaning that $H$ is not unipotent.

\bigskip

\medskip

Lemmas~\ref{corsix} and \ref{lopez}, and ${\tt IsUnipotent}$,
equate nilpotency testing of $G\leq \gln$ to testing nilpotency of
$G_{s}$ and testing whether $[G_u,G_{s}]=1$.

If $G_u$ is unipotent then ${\tt IsUnipotent}$ finds a
$\mathrm{UT}(n,\mathbb{F})$-representation of $G_u$ by
constructing a series
\begin{equation}
\label{sseries}
 V=V_0>V_1> \cdots > V_{l-1}>V_l=0
\end{equation}
of $G_u$-submodules of the underlying space $V$ for $\gln$, such
that $G_u$ acts trivially on each factor $V_{i-1}/V_i$. In fact,
$V_{i-1}/V_i$ is the fixed point space
$\mathrm{Fix}_{G_u}(V/V_i)$. We get more when $G$ is nilpotent, by
the next two lemmas.
\begin{lemma}
\label{precoolfacts} Each unipotent element of a completely
reducible nilpotent subgroup of $\mathrm{GL}(n,\mathbb{F})$ is
trivial.
\end{lemma}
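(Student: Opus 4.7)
My plan is to combine Jordan decomposition in the nilpotent setting with the simple-module decomposition coming from complete reducibility.

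Let $G \leq \gln$ be completely reducible and nilpotent, and let $u \in G$ be unipotent. I would first introduce the subset
$$
U := \{ g \in G : g_s = 1 \}
$$
of unipotent elements of $G$. Because $G$ is nilpotent, the assignment $g \mapsto g_s$ extends to a group homomorphism $G \to G_s$ (as in the proof of Lemma~\ref{corsix}), so $U$ is its kernel and hence a normal subgroup of $G$ containing $u$. As a subgroup of the nilpotent group $G$, the group $U$ is itself nilpotent, and every element of $U$ is unipotent. Applying Lemma~\ref{lopez} (equivalently, Kolchin's theorem) to $U$, whose Jordan decomposition satisfies $U_u = U$, then shows that $U$ is conjugate into $\mathrm{UT}(n,\mathbb{F})$.

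Next I would invoke complete reducibility. Decompose the natural module as $\mathbb{F}^n = V_1 \oplus \cdots \oplus V_k$, where each $V_i$ is a simple $G$-submodule. Each $V_i$ is $U$-invariant, and the restriction of $U$ to $V_i$ is again unipotent, so the fixed subspace $\mathrm{Fix}_U(V_i)$ is nonzero. Normality of $U$ in $G$ makes $\mathrm{Fix}_U(V_i)$ a $G$-submodule of $V_i$: for $v \in \mathrm{Fix}_U(V_i)$, $g \in G$, and $u' \in U$ we have $u'(gv) = g(g^{-1}u'g)(v) = gv$, since $g^{-1}u'g \in U$. Simplicity of $V_i$ then forces $\mathrm{Fix}_U(V_i) = V_i$, so $U$ acts trivially on $\mathbb{F}^n$; hence $U = 1$, and in particular $u = 1$.

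The main step to verify is that $U$ is simultaneously normal in $G$ and unipotent as a matrix group. Both assertions rely crucially on nilpotency of $G$: normality comes from the Jordan decomposition homomorphisms available via Lemma~\ref{corsix}, and the unitriangularizability conclusion comes from Lemma~\ref{lopez}. Once these two structural facts are in place, the complete reducibility step is essentially bookkeeping, driven by the standard observation that a unipotent linear group has a nonzero fixed vector in any invariant nonzero subspace, and that normality promotes this fixed subspace to a full $G$-submodule.
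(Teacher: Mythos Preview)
Your proof is correct. The paper does not argue this lemma at all: its proof is a bare citation to \cite[Corollary~1, p.~239]{Suprunenko2}. You instead supply a self-contained argument using only material already in the paper (the Jordan-decomposition homomorphism from the proof of Lemma~\ref{corsix}, and Kolchin's theorem underlying Lemma~\ref{lopez}) together with the standard Clifford-type observation that the fixed space of a normal subgroup is a $G$-submodule. This buys independence from the external reference and makes transparent exactly where nilpotency is used, namely to ensure that the set $U$ of unipotent elements of $G$ is a normal subgroup. One small remark: invoking Lemma~\ref{lopez} to triangularise $U$ on all of $\mathbb{F}^n$ is a slight detour, since what you actually need is only that each restriction $U|_{V_i}$, being a group of unipotent matrices, has a nonzero fixed vector; Kolchin's theorem (your parenthetical) gives that directly.
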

\begin{proof}
This follows from \cite[Corollary 1, \pa 239]{Suprunenko2}.
\end{proof}
\begin{lemma}
\label{coolfacts}
 Let $G\leq \gln$ be nilpotent, $\mathbb{F}$ a perfect field. Then
\begin{itemize}
\item[{\rm (i)}] $G_s$ is completely reducible over $\mathbb{F};$
\item[{\rm (ii)}]  $G$ is completely reducible over $\mathbb{F}$
if and only if $G_u=1$.
\end{itemize}
\end{lemma}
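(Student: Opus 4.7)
The plan is to reduce both parts to a single structural characterization coming from \cite[Chapter VII]{Suprunenko2}: over a perfect field, a nilpotent subgroup $H$ of $\gln$ is completely reducible if and only if every element of $H$ is semisimple. Once this is in hand, (i) and both directions of (ii) fall out by iterating the Jordan-decomposition homomorphism.

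For (i), I would first show that every element of $G_s$ is semisimple. Since $G$ is nilpotent, $g\mapsto g_s$ is a group homomorphism $G\to G_s$ (this is how the proof of Lemma~\ref{corsix} works), so $G_s$ is itself nilpotent. Because $\mathbb{F}$ is perfect, each generator $(g_i)_s$ of $G_s$ is semisimple. Applying the construction inside the nilpotent group $G_s$, the map $h\mapsto h_s$ on $G_s$ is a homomorphism that fixes every generator, and hence coincides with the identity on $G_s$; every element of $G_s$ therefore equals its own semisimple part. The characterization above now delivers complete reducibility of $G_s$.

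For the $(\Leftarrow)$ direction of (ii), I would note that $G_u=1$ forces $(g_i)_u=1$ for every generator, so each $g_i$ is already semisimple. Since $g\mapsto g_u$ is a homomorphism $G\to G_u=1$, every element of $G$ is semisimple, and the characterization yields complete reducibility of $G$. For the $(\Rightarrow)$ direction, if $G$ is completely reducible then the characterization says every element of $G$ is semisimple; in particular $(g_i)_u=1$ for each $i$, and therefore $G_u=\langle (g_1)_u,\dots,(g_r)_u\rangle=1$.

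The hard part is the structural characterization itself, and within it the implication ``c.r. nilpotent $\Rightarrow$ every element semisimple'', because the Jordan piece $h_u$ of $h\in H$ lives a priori in $H^*$ rather than in $H$, so Lemma~\ref{precoolfacts} does not apply to it directly. I would supply this step from \cite[Chapter VII]{Suprunenko2}; alternatively, one can pass to the Zariski closure and invoke that a nilpotent algebraic group is reductive (equivalently, completely reducible) if and only if its unipotent radical is trivial. The opposite implication of the characterization, ``every element semisimple $\Rightarrow$ c.r.'', also follows from the same structural theory and is what powers both (i) and the $(\Leftarrow)$ of (ii) in the argument above.
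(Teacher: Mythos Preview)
Your proposal is correct and essentially matches the paper's proof: the paper cites Suprunenko's theorem that a solvable group of diagonalizable matrices is completely reducible (giving (i) and the $\Leftarrow$ of (ii)), and invokes Lemma~\ref{precoolfacts} for the $\Rightarrow$ of (ii), which together amount to your characterization. Your caution about the $\Rightarrow$ direction is well placed---Lemma~\ref{precoolfacts} as literally stated says only that unipotent elements \emph{of $G$} are trivial, whereas one needs $(g_i)_u=1$; the paper's terse ``the converse is Lemma~\ref{precoolfacts}'' implicitly uses the stronger fact (also from \cite[Chapter VII]{Suprunenko2}, and equivalent to saying every cyclic subgroup of a completely reducible nilpotent group is completely reducible) that every element of such a group is semisimple.
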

\begin{proof}
 A solvable group of diagonalizable matrices (over any
field) is completely reducible by \cite[Theorem 5, \mbox{p.}
172]{Suprunenko2}. Since $G_s\leq \gln$ consists entirely of
diagonalizable matrices, if $G_u=1$ then $G=G_s$ is completely
reducible. The converse is Lemma~\ref{precoolfacts}.
\end{proof}

If $G$ is nilpotent and $\mathbb{F}$ is perfect then
Lemmas~\ref{corsix} and ~\ref{coolfacts} imply that each factor
$V_{i-1}/V_i$ of \eqref{sseries} is a completely
reducible $G^*$-module. As a subgroup of a nilpotent completely
reducible subgroup of $\gln$ is completely reducible by
\cite[Theorem 5, \mbox{p.} 239]{Suprunenko2}, we see that if
$G$ is nilpotent then ${\tt IsUnipotent}$ constructs completely
reducible modules not just for $G^*$ but also for $G$.

We now give a procedure for reducing nilpotency testing of $G\leq
\gln$ to testing nilpotency of a matrix group generated by
diagonalizable matrices.

\bigskip

\medskip

${\tt Reduction}(G)$

\vspace*{1.5mm}

Input: $G=\langle g_1, \ldots, g_r\rangle\leq \gln$, $\mathbb{F}$
any field.

Output: $G_{s}$, a $\mathrm{UT}(n,\mathbb{F})$-representation of
$G_u$, and a message that $[G_u,G_s]=1$; or a message

`false' meaning that $G$ is not nilpotent.

\vspace*{1.5mm}

for $i\in \{1, \ldots, r\}$ do

\hspace*{2mm} find $(g_i)_u$, $(g_i)_s$;

$G_u:=\langle (g_i)_u : 1 \leq i \leq r \rangle$, $G_{s} :=\langle
(g_i)_s : 1 \leq i \leq r \rangle$;

if ${\tt IsUnipotent}(G_u)=$ `false'

\hspace*{2mm} then return `false';

else $N:= [G_u,G_{s} ]$;

if $N\neq 1$

\hspace*{2mm} then return `false';

else return $G_{s}$.

\bigskip

\medskip

There are other reductions to the completely reducible case. For
example, we could compute the radical $R$ of the
enveloping algebra $\langle G \rangle_{\mathbb{F}}$, and then the
radical series
$$
V\supset RV \supset R^2V \supset \cdots \supset R^{m}V=0
$$
(see \cite{Ronyai} for methods to compute $R$). Each
term $R^iV$ in this series is a $G$-module, and each factor
$R^iV/R^{i+1}V$ is a completely reducible $G$-module. 
The radical series may be used to write $G$ in block upper triangular form,
thereby obtaining a homomorphism $\theta$ of $G$ onto a completely
reducible subgroup of $\gln$. If $G$ is nilpotent then $\ker
\theta$ is the unipotent radical of $G$ (the unique maximal
unipotent normal subgroup of $G$), and $\ker \theta$ commutes with
every diagonalizable element of $G$ (see \cite[7.11, \mbox{p.}
97]{Wehrfritz}).

\subsection{Further background} \label{firststeps}

We now prepare the way  for applying
Section~\ref{changingviacongh} to nilpotency testing over an
arbitrary field $\mathbb{F}$.

As usual, $Z_i(G)$ will denote the $i$th term of the upper 
central series of $G$; i.e., $Z_0(G)=1$ and 
$Z_i(G)/Z_{i-1}(G)={\sf Z}(G/Z_{i-1}(G))$.
\begin{lemma}
\label{dixoneasy} If $G$ is a completely reducible nilpotent
subgroup of $\mathrm{GL}(n,\mathbb{F})$ then $|G:\mathsf{Z}(G)|$
is finite.
\end{lemma}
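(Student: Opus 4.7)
The plan is to reduce to the irreducible case via the given complete reducibility, invoke the classical structural theorem of Suprunenko for nilpotent irreducible linear groups, and then assemble the pieces using a Clifford-style bookkeeping argument.

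First I would decompose the natural module $V=\mathbb{F}^n$ for $G$ as $V=V_1\oplus\cdots\oplus V_k$, a direct sum of irreducible $G$-submodules; this is possible because $G$ is completely reducible. Let $\rho_i\colon G\to\mathrm{GL}(V_i)$ be the projection onto the $i$th block, and put $G_i=\rho_i(G)$. Then each $G_i$ is an irreducible nilpotent linear group, and the faithful action of $G$ on $V$ gives a diagonal embedding $G\hookrightarrow G_1\times\cdots\times G_k$.

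Next I would invoke the classical result for nilpotent irreducible linear groups (\cite[Chapter~VII]{Suprunenko2}): each such $G_i$ satisfies $|G_i:\mathsf{Z}(G_i)|<\infty$. Set
$$
K=\{\,g\in G : \rho_i(g)\in\mathsf{Z}(G_i)\text{ for every }1\leq i\leq k\,\}.
$$
Through the diagonal embedding, $G/K$ injects into the finite group $\prod_{i=1}^{k}G_i/\mathsf{Z}(G_i)$, so $|G:K|<\infty$. To finish, I would verify $K\leq\mathsf{Z}(G)$: if $g\in K$ and $h\in G$, then on each summand $V_i$ we have $\rho_i(g)\rho_i(h)=\rho_i(h)\rho_i(g)$, because $\rho_i(g)$ centralises the whole of $G_i$; summing over $i$ yields $gh=hg$ on $V$, hence $g\in\mathsf{Z}(G)$. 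Thus $|G:\mathsf{Z}(G)|\leq|G:K|<\infty$.

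The main obstacle is the irreducible case, i.e.\ showing $|G_i:\mathsf{Z}(G_i)|<\infty$ for a nilpotent irreducible $G_i\leq\mathrm{GL}(V_i)$. I would cite Suprunenko for this. Were a direct proof needed, one would extend scalars to $\overline{\mathbb{F}}$, use Schur's lemma to force $\mathsf{Z}(G_i)$ to act by scalars, apply Clifford theory to a maximal normal abelian subgroup consisting of semisimple elements (noting that $G_i$ has no nontrivial unipotent elements by Lemma~\ref{precoolfacts}), and exploit the fact that a transitive nilpotent permutation representation on the finite set of weight spaces has finite image to conclude finiteness modulo the centre. Once this irreducible case is in hand, the remainder of the argument is entirely formal.
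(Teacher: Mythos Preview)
Your proposal is correct and follows essentially the same route as the paper: the paper simply cites Suprunenko for the irreducible case and then, in the subsequent Remark, observes that $G/\mathsf{Z}(G)$ embeds in the direct product of the central quotients of the irreducible constituents, which is exactly the reduction you carry out in detail with your subgroup $K$. Your sketch of a direct argument for the irreducible case goes beyond what the paper provides, but the overall strategy is the same.
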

\begin{proof}
See \cite[Corollary 6.5, \pa 114]{Dixon}, or \cite[Theorem 1,
\mbox{p.} 208]{Suprunenko2}. Also cf.~Zassenhaus' result
\cite[3.4, \pa 44]{Wehrfritz}.
\end{proof}
\begin{remark}
\label{tooexplicit?} \cite[Theorem 1, \mbox{p.} 208]{Suprunenko2}
is stated for irreducible groups only. The result for completely
reducible nilpotent subgroups $G$ of $\mathrm{GL}(n,\mathbb{F})$
follows from this, because $G/{\sf Z}(G)$ is isomorphic to a
subgroup of the direct product of central quotients of irreducible
nilpotent linear groups (each of degree no more than $n$).
\end{remark}
\begin{lemma}
\label{bigun} Let $G$ be a completely reducible nilpotent subgroup
of $\mathrm{GL}(n,\mathbb{F})$. If $N$ is a torsion-free normal
subgroup of $G$ then $N\leq \mathsf{Z}(G)$.
\end{lemma}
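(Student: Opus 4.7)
The plan is to leverage Lemma~\ref{dixoneasy}: since $G$ is completely reducible and nilpotent, $k:=|G:\mathsf{Z}(G)|$ is finite, so every element of $G/\mathsf{Z}(G)$ has order dividing $k$. In particular, $n^k\in\mathsf{Z}(G)$ for every $n\in N$, and $N/(N\cap\mathsf{Z}(G))$ is finite (being a subquotient of $G/\mathsf{Z}(G)$).

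Next I would show that $N$ itself is abelian. Any element of $N\cap\mathsf{Z}(G)$ centralizes all of $G$ and hence all of $N$, so $N\cap\mathsf{Z}(G)\leq\mathsf{Z}(N)$; consequently $N/\mathsf{Z}(N)$ is finite. Schur's theorem then forces $[N,N]$ to be finite, and since $N$ is torsion-free we conclude $[N,N]=1$.

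With $N$ abelian and torsion-free, the conclusion follows by a unique-root argument. Fix $n\in N$ and $g\in G$. Normality of $N$ places $n':=gng^{-1}$ in $N$, and the centrality of $n^k$ gives $(n')^k=gn^kg^{-1}=n^k$. Inside the abelian group $N$ this rewrites as $(n(n')^{-1})^k=1$; torsion-freeness of $N$ then yields $n=n'$, i.e., $gng^{-1}=n$. Hence $n\in\mathsf{Z}(G)$, as required.

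The main obstacle is the middle step, reducing to the case when $N$ is abelian, because without abelianness the final unique-root trick does not work inside a non-commutative $N$. Schur's theorem is the key ingredient here: on its own, ``torsion-free and center-by-finite'' is not obviously enough, but Schur converts the center-by-finite condition into finiteness of the commutator subgroup, which then collapses under torsion-freeness. Everything else is routine bookkeeping with the finite index $k=|G:\mathsf{Z}(G)|$.
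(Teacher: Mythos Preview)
Your argument is correct, but it follows a different path from the paper's proof. The paper argues by contradiction: if $N\not\leq\mathsf{Z}(G)$ then, since $G$ is nilpotent, the normal subgroup $N\mathsf{Z}(G)/\mathsf{Z}(G)$ meets $Z_2(G)/\mathsf{Z}(G)$ nontrivially, so one can pick $x\in N\cap Z_2(G)$ with $x\notin\mathsf{Z}(G)$. For such $x$ the commutator $\epsilon=[x,g]$ lies in $\mathsf{Z}(G)$ automatically (because $x\in Z_2(G)$), and then the same finite-index power trick you use in your final step gives $\epsilon^m=1$ directly, producing a nontrivial torsion element $\epsilon\in N$.

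The practical difference is that the paper sidesteps the need to prove $N$ abelian: by working inside $Z_2(G)$, the commutator $\epsilon$ is already central, so the power computation $(x\epsilon)^m=x^m\epsilon^m$ goes through without any appeal to Schur's theorem. Your route trades that bit of upper-central-series bookkeeping for an invocation of Schur, which makes the argument more self-contained at the cost of importing a nontrivial external result. Both proofs ultimately hinge on Lemma~\ref{dixoneasy} and the same unique-root idea; the paper's version is a little shorter and stays within elementary nilpotent-group manipulations, while yours isolates the abelian reduction as a clean standalone step.
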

\begin{proof}
Suppose that $N\not\subseteq \mathsf{Z}(G)$. Then $N{\sf
Z}(G)/{\sf Z}(G)$ is a non-trivial normal subgroup of the
nilpotent group $G/{\sf Z}(G)$, so has non-trivial intersection
with $Z_2(G)/{\sf Z}(G)$. Let $x\in N\cap Z_2(G)$, $x\not \in {\sf
Z}(G)$. By Lemma~\ref{dixoneasy}, $x^m\in {\sf Z}(G)$ for some
$m$. Choose $g \in G$ such that $x^g = x\epsilon$ for some
$\epsilon \in {\sf Z}(G)$, $\epsilon \neq 1$. Then $x^m= (x^m)^g=
(x^g)^m=x^m\epsilon^m$ implies that $\epsilon$ is a non-trivial
torsion element of $G$. But $\epsilon=x^{-1}x^g\in N$. 
\end{proof}

Now let $G$ be a finitely generated subgroup of $\gln$. Suppose
that $\Delta$ is a finitely generated subring of $\mathbb{F}$ such
that $G\leq$ $\mathrm{GL}(n,\Delta)$, and let $\varrho$ be an
ideal of $\Delta$. We continue with the notation
$\mathcal{G}(n,\Delta,\varrho)$ and $G_\varrho$ adopted in
Section~\ref{changingviacongh} for congruence subgroups. Without
loss of generality, we may assume that $\mathbb{F}$ is the field
of fractions of $\Delta$.
\begin{lemma}
\label{biguncor} Suppose that $\mathcal{G}(n,\Delta,\varrho)$ is
torsion-free if char \hspace*{-1.5mm} $\Delta=0$, and  all torsion
elements of $\mathcal{G}(n,\Delta,\varrho)$ are  unipotent if char
\hspace*{-1.5mm} $\Delta > 0$. Let $G$ be completely reducible as
a subgroup of $\mathrm{GL}(n,\mathbb{F})$. If $G$ is nilpotent
then $G_\varrho$ is a torsion-free central subgroup of $G$.
\end{lemma}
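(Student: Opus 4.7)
The plan is to verify the two conclusions separately: first that $G_\varrho$ is torsion-free, then that it is central in $G$. Once torsion-freeness is established, centrality is immediate from Lemma~\ref{bigun}, since $G_\varrho = G \cap \mathcal{G}_\varrho$ is normal in $G$ (as $\mathcal{G}_\varrho$ is normal in $\mathrm{GL}(n,\Delta)$, being the kernel of $\psi_\varrho$).

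For torsion-freeness I would split on the characteristic. If $\mathrm{char}\, \Delta = 0$, then by hypothesis $\mathcal{G}(n,\Delta,\varrho)$ is torsion-free, so a fortiori its subgroup $G_\varrho$ is torsion-free. If $\mathrm{char}\, \Delta > 0$, then any torsion element $x \in G_\varrho$ lies in $\mathcal{G}(n,\Delta,\varrho)$ and so, by hypothesis, is unipotent. But $x \in G_\varrho \leq G$, and $G$ is completely reducible and nilpotent; by Lemma~\ref{precoolfacts}, the only unipotent element of such a group is the identity. Hence $x = 1$, and $G_\varrho$ is torsion-free in this case as well.

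Having shown $G_\varrho$ is a torsion-free normal subgroup of the completely reducible nilpotent group $G$, an application of Lemma~\ref{bigun} gives $G_\varrho \leq \mathsf{Z}(G)$, completing the proof.

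There is no real obstacle here; the lemma is essentially a packaging step, combining the characteristic-specific hypothesis on $\mathcal{G}(n,\Delta,\varrho)$ with the two earlier structural facts (Lemmas~\ref{precoolfacts} and~\ref{bigun}). The only point requiring a moment's care is the positive-characteristic branch, where torsion-freeness of $G_\varrho$ does \emph{not} come directly from a torsion-free hypothesis on the ambient congruence subgroup but has to be extracted by invoking complete reducibility of $G$ to kill the unipotent torsion.
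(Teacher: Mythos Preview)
Your proof is correct and follows exactly the same approach as the paper's own proof, which compresses the argument to two sentences: ``By Lemma~\ref{precoolfacts} and the hypotheses, $G_\varrho$ is torsion-free. Hence the result follows from Lemma~\ref{bigun}.'' You have simply unpacked the characteristic split and the normality of $G_\varrho$ that the paper leaves implicit.
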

\begin{proof}
By Lemma~\ref{precoolfacts} and the hypotheses, $G_\varrho$ is
torsion-free. Hence the result follows from Lemma~\ref{bigun}.
\end{proof}

Examples~\ref{ex(a)} and \ref{ex(b)}
 show how to select $\varrho$ as stipulated 
in Lemma~\ref{biguncor}
for various $\mathbb{F}$ and $\Delta$. Also, 
Subsection~\ref{splitsect} shows how to split off a completely
reducible subgroup of $\gln$ from an arbitrary finitely generated
nilpotent subgroup of $\gln$.
\begin{theorem}
\label{humus} Suppose that $\mathcal{G}(n,\Delta,\varrho)$ is as
in Lemma{\em ~\ref{biguncor}}, and that $G$ is completely reducible.
Then $G$ is nilpotent if and only if $\psi_\varrho(G)$ is
nilpotent and $G_\varrho\leq {\sf Z}(G)$.
\end{theorem}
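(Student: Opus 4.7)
The proof should split into the two directions of the biconditional, and both are short given the machinery already assembled.

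For the forward direction, assume $G$ is nilpotent. Then $\psi_\varrho(G)$, being a homomorphic image of $G$, is nilpotent. Lemma~\ref{biguncor} applies directly (its hypotheses on $\mathcal{G}(n,\Delta,\varrho)$ are precisely what we have assumed, and $G$ is completely reducible and nilpotent), and yields $G_\varrho \leq \mathsf{Z}(G)$. So this direction is essentially just a reference to Lemma~\ref{biguncor}.

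For the backward direction, suppose $\psi_\varrho(G)$ is nilpotent (say of class $c$) and $G_\varrho \leq \mathsf{Z}(G)$. Since $G_\varrho = \ker \psi_\varrho|_G$, we have $G/G_\varrho \cong \psi_\varrho(G)$, so $G/G_\varrho$ is nilpotent of class $c$. Combined with $G_\varrho \leq \mathsf{Z}(G)$, this makes $G$ a central extension of a nilpotent group by a central subgroup, which forces $G$ to be nilpotent. The precise (standard) bookkeeping is: letting $Z_i(G)$ denote the upper central series of $G$ and letting $B_i$ be the preimage in $G$ of $Z_i(G/G_\varrho)$, an easy induction on $i$ (using that $B_{i-1} \leq Z_i(G)$ implies $B_i/Z_i(G)$ is central in $G/Z_i(G)$, hence $B_i \leq Z_{i+1}(G)$) shows $B_c = G \leq Z_{c+1}(G)$, so $G$ is nilpotent of class at most $c+1$. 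The base case $B_0 = G_\varrho \leq \mathsf{Z}(G) = Z_1(G)$ uses the hypothesis $G_\varrho \leq \mathsf{Z}(G)$.

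There is no real obstacle here; the whole content of the theorem lies in Lemma~\ref{biguncor} (which in turn rests on Section~\ref{changingviacongh}). The theorem is essentially a clean packaging: under the torsion/unipotent hypothesis on the congruence kernel and the complete reducibility of $G$, nilpotency is detected by the finite image $\psi_\varrho(G)$ together with a centrality check on the kernel intersection $G_\varrho$, which is exactly what one wants algorithmically.
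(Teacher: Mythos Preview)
Your proof is correct and follows exactly the paper's approach: the forward direction is Lemma~\ref{biguncor} (plus the trivial remark that quotients of nilpotent groups are nilpotent), and the backward direction is the elementary central-extension argument you spell out. The paper's proof simply says ``one direction is elementary, the other is Lemma~\ref{biguncor},'' so you have faithfully expanded what the authors left implicit.
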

\begin{proof}
One direction is elementary, the other is
 Lemma~\ref{biguncor}.
\end{proof}

Theorem~\ref{humus} transforms nilpotency testing of a finitely
generated completely reducible subgroup $G$ of $\gln$ into an
equivalent pair of problems: testing whether $G_\varrho \leq {\sf
Z}(G)$, and testing whether $\psi_\varrho(G)$ is nilpotent. If
$\varrho$ is a maximal ideal then $\Delta/\varrho$ is
a finite field, and we can test nilpotency of $\psi_\varrho(G)$ as
in \cite{Large}. To test whether $G_\varrho\leq {\sf Z}(G)$ we
need a generating set for $G_\varrho$. This may be achieved if
together with the input generating set $\{ g_1, \ldots , g_r\}$ for
$G$, we know either (i) a transversal for the cosets of
$G_\varrho$ in $G$, or (ii) a presentation of $G/G_\varrho\cong
\psi_\varrho(G)$. In case (i), as long as the index
$|G:G_\varrho|=|\psi_\varrho(G)|$ is not too large then the
Schreier method \cite[Section~2.5, pp.~41-45]{CGThandbook}
is a realistic option for finding a generating set of $G_\varrho$.
In case (ii), suppose that every relator $w_j$ in the known
presentation of $\psi_\varrho(G)$ is a word in the
$\psi_\varrho(g_i)$. Then by replacing each occurrence of
$\psi_\varrho(g_i)$ in $w_j$ by $g_i$, $1 \leq i \leq r$, we get a
generating set for a subgroup of $G$ whose normal closure 
is $G_\varrho$. (This is the `normal subgroup generators' method;
cf.~\cite[\mbox{pp}. 299-300]{CGThandbook}.) As a consequence, the
following lemma solves the problem of testing whether $G_\varrho$
is central in $G$.
\begin{lemma}
\label{easynormsgpgens} Let $G=\langle g_1, \ldots ,
g_r\rangle\leq \mathrm{GL}(n,\mathbb{F})$ and
\[
\psi_\varrho(G)= \langle \psi_\varrho(g_1), \ldots ,
\psi_\varrho(g_r) \mid w_1(\psi_\varrho(g_i)), \ldots ,
w_s(\psi_\varrho(g_i)) \rangle .
\]
Then $G_\varrho$ is the normal closure in $G$ of the subgroup
\[
\widetilde{G_\varrho}= \langle w_1(g_i), \ldots , w_s(g_i) \rangle .
\]
Hence $G_\varrho \leq {\sf Z}(G)$ if and only if $w_j(g_i)\in {\sf
Z}(G)$ for all $j$, $1\leq j\leq s$, in which case $G_\varrho =
\widetilde{G_\varrho}$.
\end{lemma}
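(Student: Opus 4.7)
The plan is to exploit the universal property of the free group. Let $F$ be the free group on symbols $x_1, \ldots, x_r$, and let $\varphi: F \rightarrow G$ and $\bar\varphi: F \rightarrow \psi_\varrho(G)$ be the surjections sending $x_i$ to $g_i$ and to $\psi_\varrho(g_i)$ respectively, so that $\bar\varphi = \psi_\varrho \circ \varphi$. Then $\ker\bar\varphi = \varphi^{-1}(G_\varrho)$, and by the presentation hypothesis $\ker\bar\varphi$ is exactly the normal closure in $F$ of the words $w_j(x_i)$.

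First I would verify the easy containment. Each generator $w_j(g_i)$ of $\widetilde{G_\varrho}$ satisfies $\psi_\varrho(w_j(g_i)) = w_j(\psi_\varrho(g_i)) = 1$ because $w_j$ is a relator, so $\widetilde{G_\varrho} \leq G_\varrho$, and since $G_\varrho$ is normal in $G$ the normal closure of $\widetilde{G_\varrho}$ in $G$ also lies inside $G_\varrho$. For the reverse inclusion, given $g \in G_\varrho$ choose $u \in F$ with $\varphi(u) = g$; then $u \in \ker\bar\varphi$, so $u$ is a product of $F$-conjugates of the $w_j^{\pm 1}$. Applying $\varphi$ expresses $g$ as the corresponding product of $G$-conjugates of the $w_j(g_i)^{\pm 1}$, which is precisely an element of the normal closure in $G$ of $\widetilde{G_\varrho}$.

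For the final claim, one direction is immediate. For the converse, if every $w_j(g_i)$ lies in ${\sf Z}(G)$ then $\widetilde{G_\varrho} \leq {\sf Z}(G)$; but any subgroup of the centre is automatically normal in $G$ and equals its own normal closure, so $G_\varrho = \widetilde{G_\varrho} \leq {\sf Z}(G)$. There is no substantive technical obstacle here; this is a standard application of the relationship between a presentation and the normal closure of its relators in the free group. The one point that must be handled with care is tracking how the literal words $w_j$ pass between $F$, $G$ and $\psi_\varrho(G)$, which works cleanly because the presentation is formulated over the images $\psi_\varrho(g_i)$ of the given generating set of $G$.
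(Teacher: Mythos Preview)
Your proof is correct and is precisely the standard argument behind the ``normal subgroup generators'' method that the paper invokes in the paragraph preceding the lemma; the paper itself states the lemma without proof, citing \cite[pp.~299--300]{CGThandbook}. Your free-group formulation makes explicit what the paper leaves implicit, but the content is the same.
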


\subsection{Deciding finiteness}
\label{decfinite} After testing nilpotency of $G\leq \gln$, we can
move on to tackle other basic computational problems for $G$,
such as testing whether $G$ is finite.

Deciding finiteness of matrix groups over algebraic number fields
and functional fields has been considered by various authors, and a
practical implementation was written by Beals in {\sf GAP} for
groups over $\mathbb{Q}$ (see \cite{DBLP:conf/issac/BabaiBR93}).
The method for deciding finiteness that we introduce in this
subsection is a general approach to the problem that is uniform
with respect to the ground field. We apply it here only for
nilpotent groups, while the general case is part of separate
research.

Let $\mathrm{char} \, \mathbb{F}=0$,  and let $\varrho$ be an
ideal of the subring $\Delta$ of $\mathbb{F}$ such that
$\Delta/\varrho$ is finite. Suppose that
$\mathcal{G}(n,\Delta,\varrho)$ is torsion-free. Then, obviously,
$G\leq \mathrm{GL}(n,\Delta)$ is finite if and only if $G_\varrho$
is trivial. This suggests a very simple and general finiteness
test for $G$. However, efficiency of this test depends on knowing
an efficient method to decide whether $G_\varrho$ is trivial. If
$G$ is nilpotent then we have such a method by
Lemma~\ref{easynormsgpgens}.

\bigskip

\medskip

 \hspace*{-1.5mm} ${\tt IsNilpotentFinite}(G)$

\vspace*{1.5mm}

Input: A nilpotent subgroup $G=\langle g_1, \ldots , g_r\rangle$
of $\mathrm{GL}(n,\mathbb{F})$, $\mathrm{char}\, \mathbb{F}=0$.

Output: a message `true' meaning that $G$ is finite; and `false'
otherwise.

\vspace*{1.5mm}

if $G_u \neq 1$

\hspace*{2mm} then return `false';

if $G_\varrho\neq 1$

\hspace*{2mm} then return `false';

else return `true'.

\bigskip

\medskip

Once $G$ is confirmed to be finite, then we know that
$|G|=|\psi_\varrho(G)|$. Computing the order of $G$ is thus
reduced to the order problem for matrix groups over a finite
field. Testing whether $G_\varrho\neq 1$ in ${\tt
IsNilpotentFinite}(G)$ is feasible by Lemma~\ref{easynormsgpgens},
because we can compute a presentation of
the nilpotent group $\psi_\varrho(G)$ over a finite field without
difficulty.

Now we consider $\mathbb{F}$ of positive characteristic.
\begin{lemma}
\label{schurfurst} Let $G=\langle g_1, \ldots , g_r\rangle$ be a
nilpotent subgroup of $\gln$, $\mathrm{char}\, \mathbb{F}>0$. Then
$G_u$ is finite.
\end{lemma}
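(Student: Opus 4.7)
The plan is to reduce to a statement about finitely generated nilpotent groups of finite exponent, via the characteristic-$p$ identity for unipotent matrices.

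First I would invoke Lemma~\ref{corsix}(i) to conclude that $G_u$ is nilpotent, and then Lemma~\ref{lopez} to conclude that $G_u$ is unipotent. After replacing $G_u$ by a conjugate if necessary, we may assume $G_u \leq \mathrm{UT}(n,\mathbb{F})$. Writing each $u \in \mathrm{UT}(n,\mathbb{F})$ as $u = 1_n + N$ with $N$ strictly upper triangular (so that $N^n = 0$), the characteristic-$p$ Frobenius identity $(1_n+N)^p = 1_n + N^p$ iterates to give $u^{p^k} = 1_n + N^{p^k}$. Choosing $k$ so that $p^k \geq n$ yields $u^{p^k} = 1_n$. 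Hence every element of $G_u$ has order dividing $p^k$; in particular, $G_u$ is a finitely generated nilpotent group of finite exponent.

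Second, I would invoke the standard fact that a finitely generated nilpotent group of finite exponent is finite, and apply it to $G_u$. For completeness (or as a remark), this follows by induction on the nilpotency class $c$ of $G_u$. When $c=1$, $G_u$ is a finitely generated abelian group of finite exponent, hence finite. When $c>1$, the quotient $G_u/\mathsf{Z}(G_u)$ is finitely generated, nilpotent of class $c-1$, and of finite exponent, so it is finite by induction. Therefore $\mathsf{Z}(G_u)$ has finite index in $G_u$; it is consequently finitely generated, abelian, and of finite exponent, hence finite, and the finiteness of $G_u$ follows.

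The potential obstacle is only the closing step of extracting finiteness from bounded exponent plus finite generation plus nilpotency, which is really a routine group-theoretic fact (no appeal to Burnside-type machinery is needed since the group is nilpotent). If preferred, one could simply cite a textbook reference such as Wehrfritz rather than spell out the induction.
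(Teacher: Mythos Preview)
Your proof is correct but takes a different route from the paper. Both arguments begin the same way, establishing that $G_u$ is unipotent (hence periodic in positive characteristic) and finitely generated. The paper then simply invokes Schur's First Theorem---a periodic subgroup of $\mathrm{GL}(n,\mathbb{F})$ is locally finite---so finite generation immediately yields finiteness. You instead extract a uniform exponent bound $p^k$ from the Frobenius identity and finish with an elementary induction on nilpotency class to show that a finitely generated nilpotent group of finite exponent is finite. Your approach is more self-contained, avoiding the nontrivial linear-group input of Schur's theorem, at the cost of a slightly longer explicit argument; the paper's approach is a one-line citation. It is worth noting that your closing step genuinely needs the nilpotency of $G_u$ (finite generation plus finite exponent alone would land you in Burnside territory), whereas the paper's appeal to Schur uses only that $G_u$ is a periodic linear group and does not rely on nilpotency at that stage.
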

\begin{proof}
Schur's First Theorem \cite[\mbox{p.} 181]{Suprunenko2} asserts
that a periodic subgroup of $\gln$ is locally finite. As $G$ is
nilpotent, $G_u$ is unipotent and so periodic. Then the result
follows, because $G_u=$ $\langle (g_1)_u, \ldots , (g_r)_u\rangle$
is finitely generated.
\end{proof}

By Lemmas~\ref{corsix} and \ref{schurfurst}, if $G$ is nilpotent
then $G$ is finite precisely when $G_s$ is finite. Let
$\mathbb{F}$ be perfect, and suppose that all torsion elements of
$\mathcal{G}(n,\Delta,\varrho)$ are unipotent. Then as
$G_s/(G_s)_\varrho$ is finite, $G$ is finite if and only if
$(G_s)_\varrho$ is trivial, by Lemma~\ref{biguncor}. If
$\mathbb{F}$ is not perfect then we can still find a normal
unipotent subgroup $U$ of $G$ such that $G/U$ is isomorphic to a
completely reducible subgroup of $\gln$ (see the
end of Subsection~\ref{splitsect}), and the above reasoning goes
through.

For another method to decide finiteness of $G$, 
incorporated with nilpotency testing of $G$, see
Subsection~\ref{firstuseofabseries} below.

\subsection{Polycyclic presentations}

A finitely generated nilpotent group is polycyclic, and therefore
has a (consistent) polycyclic presentation. One benefit of
possessing a polycyclic presentation for a nilpotent subgroup $G$
of $\gln$ is that we gain access to the numerous exisiting
algorithms for abstract polycyclic groups (see \cite[Chapter
9]{Sims}, \cite[Chapter 8]{CGThandbook}, and the {\sf GAP} package
`Polycyclic'~\cite{PolycycPackage}), which may be used to
further investigate the structure of $G$.

The papers \cite{BettinaAssmann,BettinaAssmann2} deal with the
problem of constructing a polycyclic presentation for a finitely
generated subgroup $G$ of $\glq$. Specifically, the algorithm
${\tt PolycyclicPresentation}(G)$ in \cite{BettinaAssmann}
attempts to compute polycyclic presentations for
$\psi_\varrho(G)$, $G_\varrho/U_\varrho$, and $U_\varrho$, where
$\varrho = p\mathbb{Z}_{\pi}$ for a finite set $\pi$ of primes not
containing the odd prime $p$,
$\psi_\varrho:\mathrm{GL}(n,\mathbb{Z}_\pi)\rightarrow\mathrm{GL}(n,p)$
is the associated congruence homomorphism, and $U_\varrho$ is a
unipotent radical of $G_{\varrho}$. If $G$ is polycyclic then the
algorithm returns a polycyclic presentation of $G$. The algorithm
fails to terminate if $G$ is solvable but not polycyclic
(i.e., $U_\varrho$ is not finitely generated). In this
subsection we propose a modification of ${\tt
PolycyclicPresentation}$ which either returns a polycyclic
presentation of $G$, or detects that $G$ is not nilpotent.

The paper \cite{BettinaAssmann2} contains another algorithm, ${\tt
IsPolycyclic}(G)$, for polycyclicity testing of a subgroup $G$ of
$\glq$. ${\tt IsPolycyclic}(G)$ always terminates, returning
either a polycyclic presentation of $G$, or a message that $G$ is
not polycyclic. A nilpotency testing algorithm based on ${\tt
IsPolycyclic}(G)$ is also given in \cite{BettinaAssmann2}. That
algorithm has the following stages: (i) testing whether $G$ is
polycyclic, (ii) testing whether $G/U$ is nilpotent, where $U$ is
a unipotent radical of $G$, and (iii) testing whether $G$ acts
nilpotently on $U$. Our approach in this subsection avoids the
possibly time-consuming step (i), and replaces step (iii) with a
simpler test.

The strategy of our algorithm is as follows. Let $G$ be a finitely
generated subgroup of $\gln$, where for convenience $\mathbb{F}$
is assumed to be perfect. After applying ${\tt Reduction}(G)$, we
will know either that $G$ is not nilpotent, or that $G\leq \langle
G_u , G_s\rangle$, $G_u$ is unipotent, and $[G_u,G_s]=1$. In the
latter event, we find polycyclic
presentations of $G_u$ and $G_s$. Note that if we proceed
further after ${\tt Reduction}(G)$, then the finitely generated
nilpotent group $G_u \leq \mathrm{UT}(n,\mathbb{F})$ is definitely
polycyclic. Next, we find presentations of 
$\psi_\varrho(G_s)$ and $(G_s)_\varrho$ for a suitable 
$\varrho$. 
Recall that if $G$
is nilpotent then $(G_s)_\varrho \leq {\sf Z}(G_s)$ is abelian
(see Lemma~\ref{biguncor}).

\bigskip

\medskip

${\tt PresentationNilpotent}(G)$

\vspace*{1.5mm}

Input: $G=\langle g_1, \ldots,  g_r\rangle\leq \gln$, $\mathbb{F}$
perfect.

Output: a polycyclic presentation of $G$, or a message `false'
meaning that $G$ is not nilpotent.

\vspace*{1.5mm}

\begin{enumerate}

\item If ${\tt Reduction}(G)=$ `false' then return `false'; else
go to step (\ref{notwo}). \item \label{notwo} Determine a
polycyclic presentation of $G_u$ as a finitely generated subgroup
of $\mathrm{UT}(n,R)$, $R$ a finitely generated subring of
$\mathbb{F}$. \item \label{pcpimage} Compute a generating set for
$\psi_\varrho(G_s)$, and use this to attempt to construct a
polycyclic presentation of $\psi_\varrho(G_s)$. Return `false' if
the attempt fails. \item \label{pcpkernel} Determine a generating
set for $(G_s)_\varrho$. If $(G_s)_\varrho$ is not central in
$G_s$ then return `false'. Else construct a polycyclic
presentation of the finitely generated abelian group
$(G_s)_\varrho$. \item \label{almostthere} Combine the
presentations of $\psi_\varrho(G_s)$ and $(G_s)_\varrho$ found in
steps (\ref{pcpimage}) and (\ref{pcpkernel}) to get a polycyclic
presentation of $G_s$. \item \label{finallythere} Combine the
presentations of $G_u$ and $G_s$ found in steps (\ref{notwo}) and
(\ref{almostthere}) to get a polycyclic presentation of $G^*= G_u
G_s$ and thence a polycyclic presentation of $G\leq G^*$.
\end{enumerate}

\bigskip

\medskip

Implementation of ${\tt PresentationNilpotent}$ depends on 
having algorithms for computing the polycyclic
presentations in steps \eqref{notwo} and \eqref{pcpkernel}. Such
algorithms are available for finite fields and number
fields (see \cite{BettinaAssmann,BettinaAssmann2}).

\subsection{Testing nilpotency using an abelian series; the
adjoint representation} \label{firstuseofabseries}

Methods for testing nilpotency of matrix groups, relying on
properties of nilpotent linear groups, were proposed in
\cite{Large}. Although those methods were applied only to groups
over finite fields, they are valid over other fields as well. In
this subsection we justify this statement.

As in \cite[Subsection 2.2]{Large} we define a recursive procedure
${\tt SecondCentralElement}(G,H)$ which accepts as input finitely
generated subgroups $G, H$ of $\gln$, $\mathbb{F}$ any field,
where $H$ is a non-abelian normal subgroup of $G$. If $G$ is
nilpotent then the recursion terminates in a number of rounds no
greater than the nilpotency class of $G$, returning an element of
$Z_2(H)\setminus {\sf Z}(H)$. We therefore seek an upper bound on
nilpotency class of nilpotent subgroups of $\gln$. (Such a bound
exists only for certain fields $\mathbb{F}$. For instance if
$\mathbb{F}$ is algebraically closed then $\gln$ contains
nilpotent groups of every class; see \cite[Corollary 1, \mbox{p.}
214]{Suprunenko2}.) 
\begin{lemma}
\label{classbdprecursor} Let $G$ be a nilpotent completely
reducible subgroup of $\gln$ contained in $\mathrm{GL}(n,\Delta)$,
$\Delta$ a finitely generated subring of $\mathbb{F}$. Let
$\varrho$ be an ideal of $\Delta$ as in Lemma{\em ~\ref{biguncor}}. Then
the nilpotency class of $G$ is at most the nilpotency class of
$\psi_\varrho(G)$ plus $1$.
\end{lemma}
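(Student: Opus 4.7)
The plan is to leverage Lemma~\ref{biguncor} to show that the congruence kernel $G_\varrho$ sits inside the centre of $G$, and then exploit the standard fact that the nilpotency class of $G$ exceeds that of $G/\mathsf{Z}(G)$ by exactly one.

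First I would observe that the hypotheses of Lemma~\ref{biguncor} are satisfied: $G$ is completely reducible and nilpotent, and $\varrho$ is chosen so that either $\mathcal{G}(n,\Delta,\varrho)$ is torsion-free (in characteristic zero) or all its torsion elements are unipotent (in positive characteristic). Hence $G_\varrho \leq \mathsf{Z}(G)$, and in particular $G_\varrho \leq Z_1(G)$.

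Next I would pass to the quotient. Since $G_\varrho \leq Z_1(G)$, the natural surjection $G \twoheadrightarrow G/Z_1(G)$ factors through $G/G_\varrho$, so $G/Z_1(G)$ is a homomorphic image of $G/G_\varrho \cong \psi_\varrho(G)$. Write $c$ for the nilpotency class of $\psi_\varrho(G)$; since nilpotency class cannot increase under homomorphic images, the class of $G/Z_1(G)$ is at most $c$, so $Z_c(G/Z_1(G)) = G/Z_1(G)$.

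Finally I would pull this back to $G$. Using the standard correspondence $Z_{i+1}(G)/Z_1(G) = Z_i(G/Z_1(G))$ for all $i\geq 0$, the conclusion above becomes $Z_{c+1}(G) = G$, which means $G$ has nilpotency class at most $c+1$, as required. No step here looks like a serious obstacle; the entire argument rests on Lemma~\ref{biguncor}, and the remainder is a routine manipulation of the upper central series.
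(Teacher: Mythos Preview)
Your proof is correct and follows exactly the approach the paper intends: the paper's own proof is the single line ``This is clear by Lemma~\ref{biguncor},'' and your argument is precisely the routine unpacking of that remark --- $G_\varrho \leq \mathsf{Z}(G)$ forces $G/\mathsf{Z}(G)$ to be a quotient of $\psi_\varrho(G)$, whence the class bound. One tiny wording quibble: in your opening sentence you say the class of $G$ exceeds that of $G/\mathsf{Z}(G)$ by \emph{exactly} one, which fails for abelian $G$; but your actual argument only uses (and correctly proves) the inequality, so there is no gap.
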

\begin{proof}
This is clear by Lemma~\ref{biguncor}.
\end{proof}
\begin{example}
{\em Theorem 2 of \cite{MR1878590} gives an upper bound $3n/2$
on the nilpotency class of subgroups of $\glq$. This further
implies an upper bound $3mn/2$ for subgroups of
$\mathrm{GL}(n,\mathbb{P})$, where $\mathbb{P}$ is a number field
of degree $m$ over $\mathbb{Q}$. Suppose that $G$ is a finitely
generated nilpotent subgroup of $\mathrm{GL}(n, \mathbb{Q}(X))$.
Since $\mathrm{UT}(n,\mathbb{F})$ has nilpotency class $n-1$ (see
\cite[Theorem 13.5, \mbox{p.} 89]{Suprunenko2}), it follows from
Lemmas~\ref{corsix} and \ref{classbdprecursor} that the nilpotency
class of $G$ is at most $\frac{3n}{2}+1$. Similar remarks pertain to
groups over $\mathbb{Q}(X_1, \ldots , X_m)$. }
\end{example}
\begin{example}
\label{finitefieldclassbd}
 {\em Let $q$ be a power of a prime $p$.
If $G$ is a finitely generated nilpotent subgroup of $\gln$ for
$\mathbb{F}=\mathrm{GF}(q)(X)$ then $G$ has nilpotency class at
most $l_{n,q}+1$, where $l_{n,q}$ is an upper bound on the class
of nilpotent subgroups of $\glnq$. A formula for $l_{n,q}$ may be
deduced from \cite[Theorem C.3]{Bialo}:
\begin{equation}
l_{n,q} = n  \cdot  \mathrm{max} \{ (t-1)s+1 \mid t \neq p \ \,
\mathrm{prime}, \ t\leq n , \ t^s \ \mathrm{dividing} \
q-1\}\label{classbd}
\end{equation}
That is, $n((t-1)s+1)$ is an upper bound on the class of a Sylow
$t$-subgroup of $\glnq$, where $t^s$ is the largest power of the
prime $t$ dividing $q-1$ (slightly better bounds are known for
special cases, e.g., $t=2$). We restrict to $t\leq n$ in
\eqref{classbd} because a $t$-subgroup of $\glnq$ is abelian if
$t\neq p$ and $t>n$ (cf.~\cite[Lemma 2.25]{Large}).}
\end{example}

We assume henceforth that we are able to specify a number
$k_\mathbb{F}$ such that if termination does not happen in 
$k_\mathbb{F}$ rounds or less then ${\tt
SecondCentralElement}(G,H)$ reports that $G$ is not nilpotent;
otherwise, the procedure returns an element $a \in Z_2(H)\setminus
{\sf Z}(H)$ such that $[G,a] \leq {\sf Z}(H)$.

Other procedures in \cite{Large} that were designed for
finite fields $\mathbb{F}$ also carry over to any $\mathbb{F}$.
Given $a \in Z_2(G)\setminus{\sf Z}(G)$, let $\varphi_a: G
\rightarrow {\sf Z}(G)\cap [G,G]$ be the homomorphism defined by
$g\mapsto [g,a]$. If $G$ is completely reducible then ${\tt
NonCentralAbelian}(G,a)$ returns the abelian normal subgroup
$A=\langle a \rangle^G=$ $\langle a, \varphi_a(G)\rangle$ of $G$,
and ${\tt Centralizer}(G,A)$ returns a generating set for the
kernel ${\sf C}_G(A)$ of $\varphi_a$. ${\tt
NonCentralAbelian}(G,a)$ requires a `cutting procedure' for 
$\langle A \rangle_\mathbb{F}$, to reduce
computations to the case of cyclic $\varphi_a(G)$. Also,
\cite[Lemma 2.17 and Corollary 2.18]{Large} hold for any field
$\mathbb{F}$; hence, as in the finite field case, we get a
moderate upper bound on the index $|G:{\sf C}_G(A)|$.

The cutting procedure described in \cite[Section 3]{Ronyai} finds
the simple components of a finite-dimensional commutative
semisimple algebra over any field $\mathbb{F}$, input by a set of
algebra generators. When $\mathbb{F}=\mathbb{Q}$ another method,
based on \cite[Lemma 5]{MR780623}, can be applied (see
\cite[Section
 5.2]{BettinaAssmann}). The main ingredient here
is an efficient method for factorizing polynomials over
$\mathbb{F}$.

The discussion above shows that the recursive procedure ${\tt
TestSeries}$ of \cite[Subsection 2.4]{Large} can be defined over
any field $\mathbb{F}$. The basic steps in the recursion are
outlined in \cite[\mbox{pp.} 113-114]{Large}. If it does not
detect that an input finitely generated subgroup $G$ of $\gln$ is
not nilpotent, then ${\tt TestSeries}(G,l)$ returns a series
\begin{equation}
\label{chain4}
 \langle 1_n\rangle  \lhd A_1 \lhd A_2 \lhd \cdots \lhd A_l \unlhd
 C_l \lhd  \cdots \lhd C_2 \lhd C_1 \lhd  G
\end{equation}
where the $A_i$ are abelian,
$C_i\unlhd G$ is the centralizer of
$A_i$ in $C_{i-1}$, and the factors $C_{i-1}/C_i$ are abelian.
That is, all factors of consecutive terms in \eqref{chain4} are
abelian, except possibly the middle factor $C_l/A_l$. The
construction of further terms in \eqref{chain4} continues, with
strict inclusions everywhere except possibly in the middle of the
series, as long as $C_l$ is non-abelian.
\begin{lemma}
\label{selfcent} For some $l\leq n-1$, the term $C_l$ in
\eqref{chain4} is abelian.
\end{lemma}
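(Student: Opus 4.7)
My plan is to measure how finely $\overline{V} := V \otimes_{\mathbb{F}} \overline{\mathbb{F}}$ decomposes under the abelian group $A_i$. I will write $\overline{V} = \bigoplus_{\chi} V_\chi^{(i)}$ as the direct sum of $A_i$-character spaces, indexed by the distinct characters $\chi : A_i \to \overline{\mathbb{F}}^\times$ appearing on $\overline V$, and set $k_i$ to be the number of such characters. Since $A_i \leq C_i$ and $C_i$ centralizes $A_i$, the group $C_i$ preserves each $V_\chi^{(i)}$, yielding an embedding $C_i \hookrightarrow \prod_{\chi} \mathrm{GL}(V_\chi^{(i)})$. When $k_l = n$ this codomain is a torus $(\overline{\mathbb{F}}^\times)^n$, and $C_l$ is abelian.

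The crux will be establishing the refinement inequality $k_{i+1} \geq k_i + 1$ whenever $C_i$ is non-abelian, together with the base case $k_1 \geq 2$. I will use that the element $a \in Z_2(C_i) \setminus \mathsf{Z}(C_i)$ from which $A_{i+1}$ is constructed (as $\langle a, [C_i,a]\rangle$) cannot act as a scalar on every $V_\chi^{(i)}$: if it did, $a$ would commute with the whole product $\prod_\chi \mathrm{GL}(V_\chi^{(i)})$, hence with $C_i$, contradicting $a \notin \mathsf{Z}(C_i)$. On a $V_\chi^{(i)}$ where $a$ is non-scalar, $a$ is semisimple (being an element of the completely reducible nilpotent group $G$, via Lemmas~\ref{corsix}(ii) and \ref{precoolfacts}), so it has at least two distinct eigenvalues, splitting $V_\chi^{(i)}$ into at least two $A_{i+1}$-character spaces. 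Applying the same reasoning to $a_1 \in Z_2(G) \setminus \mathsf{Z}(G)$, noting that any scalar matrix lies in $\mathsf{Z}(G)$, will yield $k_1 \geq 2$. Together these give $k_l \geq l + 1$; combined with $k_l \leq n$ (the character spaces are nonzero and sum to the $n$-dimensional $\overline V$), the process must terminate with $C_l$ abelian by $l = n - 1$.

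The main obstacle will be the strict refinement step. Over $\mathbb{F}$ itself the isotypic decomposition need not refine strictly — a new generator of $A_{i+1}$ could act with irreducible minimal polynomial on an isotypic summand and leave it isotypic — which is why the plan passes to the algebraic closure, where every character is one-dimensional and non-scalar semisimple action forcibly splits the corresponding character space. A secondary point to check is that the constructed $A_{i+1}$ is genuinely abelian and strictly contains $A_i$, which follows because $a \in Z_2(C_i)$ makes $[C_i,a]$ central in $C_i$ and $a \notin A_i \subseteq \mathsf{Z}(C_i)$.
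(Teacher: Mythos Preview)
Your argument is sound and is almost certainly the same eigenspace-counting argument that \cite[Lemma~2.20]{Large} carries out over finite fields: one shows that the commutative enveloping algebras $\langle A_i\rangle_{\overline{\mathbb F}}$ strictly increase in dimension (equivalently, the number $k_i$ of $A_i$-characters on $\overline V$ strictly increases), and since these algebras are \emph{\'etale} their dimension is at most $n$, forcing $C_{n-1}$ into a torus.

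One point to flag. Your key step---``$a$ is semisimple, so on a block where it is non-scalar it has at least two eigenvalues''---rests on every element of $G$ being diagonalizable. You justify this by invoking Lemmas~\ref{corsix}(ii) and~\ref{precoolfacts}, i.e.\ by assuming $G$ is completely reducible \emph{and} nilpotent. Neither hypothesis is part of the lemma as stated; the paper's standing assumption of complete reducibility is only announced \emph{after} this lemma, and ${\tt TestSeries}$ may well be running on a group that later turns out not to be nilpotent. In the paper's algorithmic pipeline this is harmless, since ${\tt TestSeries}$ is only ever invoked on $G_s$ or on $\bar G=\mathrm{adj}(G)$ after verifying that the generators are diagonalizable; but as written your proof does not cover the lemma in the generality in which it is stated. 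Either add the hypothesis explicitly, or note that it suffices that each $A_i$ consist of diagonalizable matrices (which is what the construction actually guarantees once the input generators are diagonalizable, since each new $a$ lies in a group generated by such elements only when the nilpotency test has not yet failed).
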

\begin{proof}
Cf.~the proof of \cite[Lemma 2.20]{Large}.
\end{proof}

{\bf N.B.} {\em Until further notice in this subsection, $G$ is
completely reducible.}
\begin{lemma}
\label{wherez} ${\sf Z}(G)$ is contained in every term $C_i$ of
 $\eqref{chain4}$. Therefore, if $G$ is nilpotent then
$G/C_l$ is finite.
\end{lemma}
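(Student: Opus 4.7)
The plan is to handle the two assertions in turn, and both are essentially bookkeeping once one unpacks the definition of the chain. Set $C_0 := G$, so that by construction $C_i = \mathsf{C}_{C_{i-1}}(A_i)$, and recall $A_i \leq C_{i-1} \leq G$.

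For the first assertion I would proceed by induction on $i$. The base $\mathsf{Z}(G) \leq C_0 = G$ is trivial. For the step, assume $\mathsf{Z}(G) \leq C_{i-1}$. Any $z \in \mathsf{Z}(G)$ commutes with every element of $G$, and in particular with every element of $A_i \subseteq G$; combined with the inductive hypothesis this places $z$ in $\mathsf{C}_{C_{i-1}}(A_i) = C_i$. Hence $\mathsf{Z}(G) \leq C_i$ for every $i$.

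For the second assertion, suppose $G$ is nilpotent. By Lemma~\ref{selfcent}, $C_l$ is abelian for some $l \leq n-1$, and by Lemma~\ref{dixoneasy} (since $G$ is completely reducible and nilpotent) $|G : \mathsf{Z}(G)|$ is finite. Now $\mathsf{Z}(G) \leq C_l \leq G$ by what we just proved, so
\[
|G : \mathsf{Z}(G)| \; = \; |G : C_l| \cdot |C_l : \mathsf{Z}(G)|,
\]
whence $|G : C_l|$ is finite as well.

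No step looks like a real obstacle: the only thing to be careful about is being explicit that $A_i$ sits inside $G$ (so that centrality of $z$ in $G$ really does imply $z$ centralizes $A_i$), and invoking Lemma~\ref{dixoneasy} correctly in the completely reducible nilpotent setting we are assuming throughout this portion of the subsection.
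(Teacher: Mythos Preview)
Your proof is correct and follows essentially the same approach as the paper: induction on $i$ using that $A_i \leq G$ so $\mathsf{Z}(G)$ centralizes $A_i$, followed by an appeal to Lemma~\ref{dixoneasy}. The only superfluous step is your invocation of Lemma~\ref{selfcent}: the index $l$ is already fixed by the chain~\eqref{chain4}, and abelianness of $C_l$ plays no role in bounding $|G:C_l|$.
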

\begin{proof}
Certainly ${\sf Z}(G) \leq C_1={\sf C}_G(A)$. Assume  that ${\sf
Z}(G) \leq C_{k-1}$; then ${\sf Z}(G)$ is contained in the
$C_{k-1}$-centralizer $C_k$ of $A_k$. The second statement is now
clear by Lemma~\ref{dixoneasy}.
\end{proof}
\begin{corollary}
\label{ginfinitetest} Suppose that $G$ is nilpotent. Then $G$ is
finite if and only if $C_l$ in \eqref{chain4} is finite.
\end{corollary}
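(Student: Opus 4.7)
The plan is to observe that this corollary is essentially an immediate consequence of Lemma~\ref{wherez} combined with the basic fact that an extension of a finite group by a finite group is finite. Concretely, Lemma~\ref{wherez} tells us that ${\sf Z}(G)\leq C_l$, and since $G$ is nilpotent and completely reducible, Lemma~\ref{dixoneasy} ensures $|G:{\sf Z}(G)|<\infty$; hence $|G:C_l|<\infty$ unconditionally under the hypotheses of the corollary.

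The forward direction is trivial: if $G$ is finite then the subgroup $C_l$ is finite. For the reverse direction, suppose $C_l$ is finite. Because $G$ is nilpotent and completely reducible, Lemma~\ref{wherez} gives $|G:C_l|<\infty$, and then $|G|=|G:C_l|\cdot|C_l|<\infty$.

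There is no real obstacle here; the only mild subtlety is to make explicit that the assumption ``$G$ nilpotent'' is what activates Lemma~\ref{wherez}, since otherwise we could not guarantee $|G:C_l|<\infty$. In the write-up I would simply cite Lemma~\ref{wherez} for the key finiteness of the quotient and conclude in one line.
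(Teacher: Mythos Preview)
Your argument is correct and matches the paper's intended approach: the corollary is stated without proof precisely because it is immediate from Lemma~\ref{wherez}, which gives $|G:C_l|<\infty$ when $G$ is nilpotent (and completely reducible, per the standing assumption in the subsection). Your write-up is fine; you could even shorten it by citing the second sentence of Lemma~\ref{wherez} directly rather than re-deriving $|G:C_l|<\infty$ via ${\sf Z}(G)\leq C_l$ and Lemma~\ref{dixoneasy}.
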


Corollary~\ref{ginfinitetest} gives another finiteness test for
completely reducible nilpotent subgroups $G$ of $\gln$; 
cf.~Subsection~\ref{decfinite}. This test requires that we are able to
decide finiteness of the finitely generated completely reducible
abelian matrix group $C_l$. To that end, the next result may be
useful.
\begin{lemma}
If $G$ is non-abelian nilpotent then $C_l$ has non-trivial
torsion.
\end{lemma}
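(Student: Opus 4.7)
The plan is a proof by contradiction. Suppose that $C_l$ is torsion-free. Since $C_l \unlhd G$ is built into the construction of \eqref{chain4}, and $G$ is completely reducible and nilpotent, Lemma~\ref{bigun} forces $C_l \leq \mathsf{Z}(G)$. Combined with the reverse inclusion of Lemma~\ref{wherez}, this yields $C_l = \mathsf{Z}(G)$.

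The contradiction will come from recalling how $A_l$ enters the series. The recursion ${\tt TestSeries}$ introduces $A_l$ around an element $a_l \in Z_2(C_{l-1})\setminus \mathsf{Z}(C_{l-1})$ returned by ${\tt SecondCentralElement}(G, C_{l-1})$, so that $a_l \in A_l$ and $A_l \leq C_l$ (both assertions are explicit in \eqref{chain4}). Under the hypothesis $C_l = \mathsf{Z}(G)$, we would have $a_l \in \mathsf{Z}(G)$, and since $a_l \in C_{l-1} \leq G$ this puts $a_l \in \mathsf{Z}(G)\cap C_{l-1}\subseteq \mathsf{Z}(C_{l-1})$, contradicting $a_l \notin \mathsf{Z}(C_{l-1})$. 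The non-abelian hypothesis on $G$ is used only to guarantee that at least one round of the recursion has in fact run, so that $a_l$ and $A_l$ genuinely exist (if $G$ were abelian then the chain would collapse to $G$ itself and there would be nothing to prove).

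The delicate point — really the only one — is to be sure that the element $a_l$ produced at the last stage of ${\tt SecondCentralElement}$ is indeed inherited by $A_l$ and that $A_l$ sits inside $C_l$; both facts are immediate from the definition of \eqref{chain4} in \cite[Subsection~2.4]{Large}, so once the recursion is unwound the argument is very short. All of the real content is supplied by Lemma~\ref{bigun}: the structural constraint that a torsion-free normal subgroup of a completely reducible nilpotent group must be central is precisely what is incompatible with the design of the series when $G$ is non-abelian.
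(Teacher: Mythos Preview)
Your argument is correct, but it takes a different route from the paper's proof. The paper proceeds constructively: it picks the element $a=a_1\in Z_2(G)\setminus \mathsf Z(G)$ used to build $A_1$, notes that $a^m\in\mathsf Z(G)$ for some $m$ by Lemma~\ref{dixoneasy}, and then observes that for any $g$ not centralizing $a$ the commutator $[g,a]\in A_1\leq C_l$ is a non-trivial element of order dividing $m$. So an explicit torsion element of $C_l$ is exhibited directly, using only Lemma~\ref{dixoneasy}.

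Your approach is more structural: you invoke Lemma~\ref{bigun} (whose proof itself rests on Lemma~\ref{dixoneasy}) together with Lemma~\ref{wherez} to force $C_l=\mathsf Z(G)$, and then obtain a contradiction from the presence of $a_l\in A_l\leq C_l$ with $a_l\notin \mathsf Z(C_{l-1})$. This is perfectly valid, and it is pleasant that it recycles Lemma~\ref{bigun} rather than repeating its mechanism. Two minor remarks: first, once you have $C_l=\mathsf Z(G)$ you could already stop at $a_1$, since $a_1\in A_1\leq C_l=\mathsf Z(G)$ contradicts $a_1\notin\mathsf Z(G)$; there is no need to go to the last stage $a_l$. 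Second, the paper's argument has the small advantage of actually naming a torsion element of $C_l$, whereas yours only establishes existence indirectly.
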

\begin{proof}
Suppose that $C_l$ is torsion-free. Let $a\in Z_2(G)\setminus{\sf
Z}(G)$. Since $a^m\in {\sf Z}(G)$ for some $m$ by
Lemma~\ref{dixoneasy}, there exists $g\in G$ such that $[g,a]\in
{\sf Z}(G)$ has finite non-trivial order (dividing $m$). This
contradicts $[g,a]\in A_1\leq C_l$.
\end{proof}

Suppose that $G$ is finite. Then we can apply \cite[Lemma
2.23]{Large} to $G$. That is, we refine \eqref{chain4} to a
polycyclic series of $G$, then test nilpotency of $G$ via prime
factorization of the  cyclic quotients in the refined series, and
checking that factors for different primes commute. Hence the
algorithm ${\tt IsNilpotent}$ from \cite[Section 2]{Large} can be
employed for nilpotency testing of $G$. In the more general
setting we label this algorithm ${\tt IsFiniteNilpotent}$. This
algorithm, which accepts only finite $G\leq \gln$ as input, also
yields the Sylow decomposition of nilpotent $G$. Complexity
estimation 
of ${\tt
IsFiniteNilpotent}$ is undertaken in \cite[Section 2]{Large}.

Now we examine the case that $G$ is infinite. First we state a few
structural results.
\begin{lemma}
\label{periodres} Let $\pi$ be the set of primes less than or
equal to $n$. Suppose that $G$ is nilpotent. Then every element of
(the finite group) $G/{\sf Z}(G)$ has order divisible only by the
primes in $\pi$. Moreover, no element of $G/{\sf Z}(G)$ has order
divisible by $\mathrm{char}\, \mathbb{F}$.
\end{lemma}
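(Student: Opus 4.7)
The plan is to reduce the statement to irreducible $G$ via the decomposition afforded by complete reducibility, and then for each prime $q$ dividing $|G/{\sf Z}(G)|$ construct a conjugation homomorphism from an element of $Z_2(G)\setminus {\sf Z}(G)$ and analyse its eigenspace action on an $\bar{\mathbb{F}}$-irreducible constituent.

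\emph{Reduction to irreducible $G$.} Since $G$ is completely reducible, write $V = V_1 \oplus \cdots \oplus V_r$ with each $V_i$ an irreducible $G$-submodule, and let $G_i \leq \mathrm{GL}(V_i)$ be the image. Restriction sends ${\sf Z}(G)$ into each ${\sf Z}(G_i)$, and any $g \in G$ acting centrally on every $V_i$ commutes with every $h \in G$ because $h$ preserves each $V_i$; hence $G/{\sf Z}(G) \hookrightarrow \prod_i G_i/{\sf Z}(G_i)$, and it suffices to prove the claim for an irreducible nilpotent $G \leq \mathrm{GL}(m,\mathbb{F})$ with $m \leq n$.

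\emph{Conjugation homomorphism.} Fix a prime $q$ dividing $|G/{\sf Z}(G)|$. The non-trivial finite nilpotent group $G/{\sf Z}(G)$ has non-trivial centre $Z_2(G)/{\sf Z}(G)$ containing an element of order $q$; lift it to $g \in Z_2(G) \setminus {\sf Z}(G)$ with $g^q \in {\sf Z}(G)$. Since $[g,h] \in {\sf Z}(G)$ for all $h$, the formula $h^{-1}gh = g \cdot c(h)$ defines a homomorphism $c \colon G \to {\sf Z}(G)$ with image in the commutant $D := \mathrm{End}_{\mathbb{F}[G]}(V)$, a division ring by Schur's lemma. Raising $h^{-1}gh = gc(h)$ to the $q$-th power and using centrality of $g^q$ gives $c(h)^q = 1$, and $c \not\equiv 1$ since $g$ is non-central. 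In a division ring, $c(h) \neq 1$ forces $c(h) - 1 \in D^\times$, so $c(h)$ acts invertibly on $V$ and therefore non-trivially on every irreducible $G$-subquotient of $V_{\bar{\mathbb{F}}}$. Pick any such subquotient $W$, of dimension at most $m \leq n$; by Schur over $\bar{\mathbb{F}}$, each $c(h)$ acts on $W$ as a scalar $c_W(h) \in \bar{\mathbb{F}}^\times$, giving a non-trivial homomorphism $c_W \colon G \to \mu_q$ whose image is all of $\mu_q$.

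\emph{The two cases.} If $q \neq \mathrm{char}\,\mathbb{F}$, then $g^q$ acts on $W$ as $\lambda_W I$ and $X^q - \lambda_W$ has $q$ distinct roots in $\bar{\mathbb{F}}$, so $g|_W$ is diagonalizable; choosing $h$ with $c_W(h)$ a primitive $q$-th root of unity, the identity $h^{-1} g h = c_W(h) g$ shows $h$ cyclically permutes the eigenspaces of $g|_W$, yielding $q$ non-zero eigenspaces and hence $q \leq \dim W \leq n$. If $q = \mathrm{char}\,\mathbb{F}$, then $c_W(h)^q = 1$ in the field $\bar{\mathbb{F}}$ forces $c_W(h) = 1$ for every $h$, contradicting the non-triviality of $c_W$.

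The most delicate point will be the passage from irreducibility of $G$ over $\mathbb{F}$ to a scalar action on an $\bar{\mathbb{F}}$-irreducible constituent, because the commutant $D$ need not equal $\mathbb{F}\cdot I$. The argument is rescued uniformly over all base fields by the division-ring invertibility of $c(h) - 1$, which guarantees $c_W(h) \neq 1$ on \emph{every} $\bar{\mathbb{F}}$-irreducible $G$-subquotient and so makes both the eigenspace count and the characteristic obstruction work independently of a perfectness hypothesis on $\mathbb{F}$.
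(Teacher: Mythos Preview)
Your argument is correct. The paper's own proof consists only of the reduction to the irreducible case (exactly as you do it, via the embedding $G/{\sf Z}(G)\hookrightarrow\prod_i G_i/{\sf Z}(G_i)$) followed by a bare citation to Suprunenko; you instead supply a self-contained proof of the irreducible case. The mechanism you use---pick $g\in Z_2(G)\setminus{\sf Z}(G)$ of order $q$ modulo the centre, form the commutator homomorphism $c(h)=[g,h]$ into ${\sf Z}(G)$, and count eigenspaces of $g$ permuted by conjugation---is essentially the classical argument one finds in Suprunenko, so the route is the same in spirit; what your write-up adds is the explicit handling of the passage to an $\overline{\mathbb{F}}$-irreducible constituent via the division-ring observation that $c(h)-1\in D^\times$ forces $c(h)$ to have no eigenvalue $1$, hence to act non-trivially on \emph{every} composition factor of $V_{\overline{\mathbb{F}}}$. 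That step is the right way to avoid any separability or perfectness hypothesis on $\mathbb{F}$.

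One small slip of exposition: where you write ``so $c(h)$ acts invertibly on $V$'' you mean ``so $c(h)-1$ acts invertibly on $V$''; the former is automatic since $c(h)\in G$, and it is the latter that yields non-triviality on each $\overline{\mathbb{F}}$-composition factor. Also, when you assert that $Z_2(G)/{\sf Z}(G)$ contains an element of order $q$ for every prime $q$ dividing $|G/{\sf Z}(G)|$, you are implicitly using that a finite nilpotent group is the direct product of its Sylow subgroups, so each prime dividing the order already divides the order of the centre; this is standard but worth a word.
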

\begin{proof}
It suffices to prove the lemma for irreducible $G$ 
(cf.~Remark~\ref{tooexplicit?}). 
Proofs of the irreducible case are
given in \cite[Chapter 7]{Suprunenko2}; see Corollary 1,
\mbox{p.206}, and Theorem 2, \mbox{p.} 216, of \cite{Suprunenko2}.
\end{proof}
\begin{corollary}
\label{sylpcentral} {\rm (Cf.~Example~\ref{finitefieldclassbd}.)} 
If $G$ is nilpotent then for
all primes $p>n$, a Sylow $p$-subgroup of $G$ is central.
\end{corollary}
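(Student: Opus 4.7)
The plan is to deduce this directly from Lemma~\ref{periodres}, using the fact that in the completely reducible setting $G/{\sf Z}(G)$ is finite (Lemma~\ref{dixoneasy}).

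Let $P$ be a Sylow $p$-subgroup of $G$ with $p>n$. Since $G$ is nilpotent, $P$ is the unique maximal $p$-subgroup of $G$ and consists exactly of the $p$-elements of $G$. Consider the image $\bar{P}=P{\sf Z}(G)/{\sf Z}(G)$ in the finite group $G/{\sf Z}(G)$. By construction, every element of $\bar{P}$ has order a power of $p$.

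On the other hand, Lemma~\ref{periodres} tells us that every element of $G/{\sf Z}(G)$ has order divisible only by primes $\leq n$. Since $p>n$, no non-trivial element of $G/{\sf Z}(G)$ can have $p$-power order, so $\bar{P}=1$. Hence $P\leq {\sf Z}(G)$.

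There is no real obstacle here: the entire content is packaged in Lemma~\ref{periodres}, so the corollary is essentially a one-line observation once one notes that Sylow $p$-subgroups in a nilpotent group are the $p$-torsion and therefore survive in $G/{\sf Z}(G)$ only as $p$-elements of a group with no $p$-torsion when $p>n$.
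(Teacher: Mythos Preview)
Your proof is correct and follows exactly the intended route: the paper states Corollary~\ref{sylpcentral} without proof precisely because it drops out immediately from Lemma~\ref{periodres}, which is just what you do. The mention of Lemma~\ref{dixoneasy} is harmless but redundant, since the finiteness of $G/{\sf Z}(G)$ is already asserted in Lemma~\ref{periodres}.
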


Recall that a group $H$ is said to be {\it $p$-primary}, for a
prime $p$, if $H/{\sf Z}(H)$ is a $p$-group.
\begin{lemma}
If $G$ is nilpotent then $G$ is a product of $p$-primary groups
for $p\leq n$.
\end{lemma}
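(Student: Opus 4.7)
The plan is to reduce the claim to the decomposition of a finite nilpotent group into its Sylow subgroups, using the standing assumption that $G$ is completely reducible to ensure $|G:{\sf Z}(G)|$ is finite.

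First, by Lemma~\ref{dixoneasy} the quotient $\overline{G}:=G/{\sf Z}(G)$ is a finite group, and since $G$ is nilpotent so is $\overline{G}$. Next, Lemma~\ref{periodres} tells us that every element of $\overline{G}$ has order divisible only by primes $p \leq n$ (and not by $\mathrm{char}\,\mathbb{F}$). Hence the classical decomposition of a finite nilpotent group into its Sylow subgroups gives
\[
\overline{G} \;=\; \prod_{p \leq n} \overline{P_p},
\]
where $\overline{P_p}$ is the (unique, normal) Sylow $p$-subgroup of $\overline{G}$, and $\overline{P_p}=1$ whenever the prime $p$ does not actually divide $|\overline{G}|$.

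Let $P_p$ denote the full preimage of $\overline{P_p}$ under the natural map $G \to \overline{G}$. Then each $P_p$ is a normal subgroup of $G$ containing ${\sf Z}(G)$. I next verify that each $P_p$ is $p$-primary. Because ${\sf Z}(G) \leq P_p$ we have ${\sf Z}(G)\leq {\sf Z}(P_p)$, so $P_p/{\sf Z}(P_p)$ is a quotient of $P_p/{\sf Z}(G)\cong \overline{P_p}$, which is a $p$-group; hence $P_p/{\sf Z}(P_p)$ is a $p$-group, as required.

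Finally, I show $G = \prod_{p \leq n} P_p$. Given any $g \in G$, decompose its image in $\overline{G}$ as a product of elements of the $\overline{P_p}$, and lift to obtain elements $g_p \in P_p$ with $g\,{\sf Z}(G) = \prod_{p\leq n} g_p\,{\sf Z}(G)$. Then $g = z\prod_{p} g_p$ for some $z\in {\sf Z}(G)$, and since ${\sf Z}(G)\leq P_{p_0}$ for any chosen prime $p_0\leq n$, the factor $z$ can be absorbed into $g_{p_0}$, giving $g$ as an explicit product of elements, one from each $P_p$. I expect no serious obstacle: the main care needed is just to keep track of the fact that ${\sf Z}(G)$ sits inside every $P_p$, so absorbing the central remainder does not break the product structure, and the bound on the primes comes cleanly from Lemma~\ref{periodres}.
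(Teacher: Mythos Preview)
Your proof is correct and follows precisely the approach the paper intends: the lemma is stated without proof immediately after Lemma~\ref{periodres} and Corollary~\ref{sylpcentral}, as an evident consequence of the Sylow decomposition of the finite nilpotent group $G/{\sf Z}(G)$ combined with the prime bound from Lemma~\ref{periodres}. Your write-up supplies exactly the details the paper omits, including the observation that ${\sf Z}(G)\leq{\sf Z}(P_p)$ so that each preimage $P_p$ is indeed $p$-primary.
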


Now let $G$ be any subgroup of $\mathrm{GL}(n,\mathbb{F})$. Set
$m$ to be the $\mathbb{F}$-dimension of the enveloping algebra
$\langle G \rangle_{\mathbb{F}}$. Define the adjoint
representation $\mathrm{adj}:G
\rightarrow\mathrm{GL}(m,\mathbb{F})$ by $\mathrm{adj}(g): x
\mapsto$ $gxg^{-1}$, $x\in\langle G \rangle_{\mathbb{F}}$. Clearly
$\ker \mathrm{adj} = {\sf Z}(G)$. If $G$ is nilpotent and
completely reducible then $\mathrm{adj}(G)$ is a finite completely
reducible subgroup of $\mathrm{GL}(m,\mathbb{F})$, by
Lemma~\ref{periodres} and Maschke's theorem. If $G = \langle g_1,
\ldots , g_r\rangle$ then by \cite[Lemma 4.1]{Beals} we can
construct a basis of $\langle G \rangle_{\mathbb{F}}$ as a
straight-line program of length $m$ over $\{ g_1, \ldots , g_r\}$.
Then we 
calculate $\mathrm{adj}(G)$ by solving a system of linear
equations. The adjoint representation is another way of
transferring nilpotency testing to the case of finite  groups.

The results presented so far in this subsection lead to the
following algorithm to test nilpotency of $G$ using an abelian
series and the adjoint representation. The input generators of $G$
are diagonalizable, but $G$ cannot be assumed in advance to be
completely reducible. Also, as mentioned earlier, this algorithm
requires knowledge of an upper bound on nilpotency class of
nilpotent subgroups of $\mathrm{GL}(n,\mathbb{F})$.

\bigskip

\medskip

\hspace*{-1.5mm} ${\tt IsNilpotentAdjoint}(G)$

\vspace*{1.5mm}

Input: $G=\langle g_1, \ldots, g_r\rangle\leq \gln$, $g_i\in \gln$
diagonalizable.

Output:  a message `true' meaning that $G$ is nilpotent, or a
message `false' meaning that $G$ is

not nilpotent.

\vspace*{1.5mm}

for $i\in \{1, \ldots ,  r\}$

\hspace*{2mm} do $\bar{g}_i:= \mathrm{adj}(g_i)$;

$\bar{G}:= \langle \bar{g}_1, \ldots , \bar{g}_r\rangle$;

if $(\bar{g}_i)_u\neq 1$ for some $i$

\hspace*{2mm} then return `false';

else invoke ${\tt TestSeries}(\bar{G})$;

if $\bar{G}$ is infinite

\hspace*{2mm} then return `false';

else invoke ${\tt IsFiniteNilpotent}(\bar{G})$.

\bigskip

\medskip

For testing whether $\bar{G}$ is infinite, 
see Corollary~\ref{ginfinitetest}.

Parts of ${\tt IsNilpotentAdjoint}$ that use polynomial
factorization (e.g., the cutting procedure) have running
time dependent on the coefficient field.  Also, computation of
$\bar{G}=\mathrm{adj}(G)$ entails squaring the dimension in
worst-case; so may be time-consuming and efficient only for small
$n$.

If ${\tt IsNilpotentAdjoint}(G)$ returns `true' then the algorithm
furnishes additional information about $G$, such
as its decomposition into $p$-primary subgroups. Also, knowing a
generating set for $\bar{G}$ we can find a generating set for
${\sf Z}(G)=\ker \mathrm{adj}$ (by the Schreier method, or using a
presentation of $\bar{G}\cong  G/{\sf Z}(G)$ to pull back to
`normal subgroup generators', hence a generating set, of ${\sf
Z}(G)$; see before Lemma~\ref{easynormsgpgens}). If we can find a
polycyclic presentation of the finitely generated completely
reducible abelian matrix group ${\sf Z}(G)$, then this can be
combined with a polycyclic presentation of $G/{\sf Z}(G)$. Thus we
gain one more method to construct a polycyclic presentation of
$G$.

\subsection{Nilpotency testing via change of ground domain and abelian series}

Finally, we  outline the simplest and most effective combination
of our ideas for nilpotency testing of finitely generated matrix
groups, over a perfect field $\mathbb{F}$. 

\subsubsection{The algorithm}

${\tt IsNilpotentMatGroup}$ as below tests nilpotency over
an infinite field $\mathbb{F}$, via ${\tt Reduction}(G)$ (if
$\mathbb{F}$ is perfect), and applying a congruence homomorphism
$\psi_\varrho$ to $G_s$, where $\varrho$ satisfies the hypotheses
of Lemma~\ref{biguncor}. Nilpotency of $\psi_\varrho(G_s)$ is
tested using an abelian series \eqref{chain4} of
$\psi_\varrho(G_s)$ in $\glnq$: see
Subsection~\ref{firstuseofabseries}. 
If $G$ is nilpotent then
$(G_s)_\varrho \leq {\sf Z}(G_s)$, and this containment can be
tested via Lemma~\ref{easynormsgpgens}.

\bigskip

\medskip

\hspace*{-1.5mm} ${\tt IsNilpotentMatGroup}(G)$

\vspace*{1.5mm}

Input: $G=\langle g_1, \ldots, g_r\rangle\leq \gln$.

Output:  a message `true' meaning that $G$ is nilpotent, or a
message `false' meaning that $G$ is

not nilpotent.

\vspace*{1.5mm}

\begin{enumerate}
\item ${\tt Reduction}(G)$. \item Construct $\psi_\varrho(G_s)\leq
\glnq$. \item Test nilpotency of $\psi_\varrho(G_s)$. \item \label
{congcentral} Test whether $(G_s)_\varrho\leq {\sf Z}(G_s)$.
\end{enumerate}

\bigskip

\medskip

There are several advantages of the approach embodied in ${\tt
IsNilpotentMatGroup}$. First, by reducing the amount of
computation over the original field $\mathbb{F}$, we hope to
escape the unfortunate circumstances that may arise when
computing over infinite fields (e.g., a blow-up in the size of
matrix entries). Another issue relates to upper bounds on
nilpotency class. If $G$ is nilpotent then procedures used in
${\tt TestSeries}$ to construct the series \eqref{chain4} that
depend on a class bound for the potentially nilpotent group
$\psi_\varrho(G)$, such as ${\tt SecondCentralElement}$, 
are guaranteed to 
terminate more quickly than they would for an arbitrary nilpotent
subgroup of $\glnq$. For example, if $\mathbb{F}=\mathbb{Q}$ then
$\psi_\varrho(G)$ inherits from $G\leq \glq$ an upper bound $3n/2$
on nilpotency class; this can be compared with the general bound
\eqref{classbd} for $\glnq$ stated in
Example~\ref{finitefieldclassbd}.

It is desirable to retain complete reducibility in step (2) of
${\tt IsNilpotentMatGroup}$. That is, the Jordan decomposition
over the top field $\mathbb{F}$ is unavoidable; we do not 
want to repeat it in $\glnq$. Let $q$ be a power of the prime $p$.
For the input $\psi_\varrho((g_i)_s)$ to ${\tt TestSeries}$
to be diagonalizable, these elements must have order coprime
to $p$. Equivalently $\varrho$ should be chosen so that $f_i(X)$
and $f_i'(X)$ are coprime for all $i$, where $f_i(X)$ is the
minimal polynomial of $\psi_\varrho((g_i)_s)$ (note that $f_i(X)$
is the image $\psi_\varrho(h_i(X))$ of the minimal polynomial
$h_i(X)$ of $g_i$). Selection of $\varrho$ is a number theory
problem if $\mathbb{F}$ is a number field.
\begin{lemma}
\label{preimcent} If $G_s$ is nilpotent and $p>n$ then any
preimage of $(\psi_\varrho(G_s))_u$ in $G_s$ is central.
\end{lemma}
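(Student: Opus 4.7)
The plan is to show that if $x \in G_s$ satisfies $y := \psi_\varrho(x) \in (\psi_\varrho(G_s))_u$, then $x$ lies in ${\sf Z}(G_s)$, by combining the centrality of the congruence kernel (Lemma~\ref{biguncor}) with the restriction on element orders in $G_s/{\sf Z}(G_s)$ afforded by Lemma~\ref{periodres}. The running hypothesis that $\mathbb{F}$ is perfect (inherited from the subsection) together with the nilpotency of $G_s$ ensures via Lemma~\ref{coolfacts} that $G_s$ is completely reducible, so that Lemmas~\ref{biguncor}, \ref{dixoneasy}, and \ref{periodres} all apply to $G_s$.

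First I would pin down that $y$ has $p$-power order. Because $G_s$ is nilpotent, so is its homomorphic image $\psi_\varrho(G_s)$. Lemma~\ref{corsix}~(i) then yields that $(\psi_\varrho(G_s))_u$ is itself nilpotent, and Lemma~\ref{lopez} upgrades this to unipotent. Since we are working over a field of characteristic $p$, every unipotent matrix has $p$-power order, so $y$ has order $p^m$ for some $m \geq 0$. Consequently $x^{p^m} \in \ker \psi_\varrho \cap G_s = (G_s)_\varrho$, and by Lemma~\ref{biguncor} this places $x^{p^m}$ in ${\sf Z}(G_s)$.

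Finally, the coset $\bar{x} = x{\sf Z}(G_s)$ lies in $G_s/{\sf Z}(G_s)$, which is finite by Lemma~\ref{dixoneasy}, and the previous step shows its order divides $p^m$. Lemma~\ref{periodres} then tells us that no element of $G_s/{\sf Z}(G_s)$ has order divisible by any prime exceeding $n$; since $p > n$ by hypothesis, $\bar{x}$ must be trivial, giving $x \in {\sf Z}(G_s)$. The main obstacle is the first step: correctly identifying $(\psi_\varrho(G_s))_u$ as a genuinely unipotent subgroup, so that all of its elements (not merely the prescribed generators $(\psi_\varrho((g_i)_s))_u$) are $p$-elements. This is where the nilpotency of $\psi_\varrho(G_s)$ is essential; once secured, the remaining chain of deductions is essentially an elementary order argument.
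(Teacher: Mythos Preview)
Your proof is correct and follows essentially the same route as the paper: show that a preimage raised to a $p$-power lands in $(G_s)_\varrho\leq{\sf Z}(G_s)$ via Lemma~\ref{biguncor}, then invoke the prime restriction on $G_s/{\sf Z}(G_s)$ (the paper cites Corollary~\ref{sylpcentral}, you cite the underlying Lemma~\ref{periodres}) to conclude centrality. The only cosmetic difference is that the paper first writes $\psi_\varrho(g)_u=\psi_\varrho(g^l)$ for a specific power $g^l$ and argues for that particular preimage, whereas you work directly with an arbitrary preimage $x$; your version is slightly cleaner in that it handles all preimages at once and makes explicit why $(\psi_\varrho(G_s))_u$ is genuinely unipotent.
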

\begin{proof}
Let $g\in G_s$. Then $\psi_\varrho(g)_u = \psi_\varrho(g^l)$ for
some $l$, and  $\psi_\varrho(g^{lp^k})=1$ for some $k$; i.e.,
$g^{lp^k}\in\allowbreak (G_s)_\varrho\leq {\sf Z}(G_s)$. By
Corollary~\ref{sylpcentral}, $g^l \in {\sf Z}(G_s)$.
\end{proof}
Lemma~\ref{preimcent} indicates that we may reasonably expect
$\psi_\varrho(G_s)$ to be completely reducible if $\varrho$ is
chosen so that $p>n$. However, if $n$ is large then of course it
is advisable to work with $p\leq n$.

\subsubsection{Implementation and experimental results}

 Our implementation of ${\tt IsNilpotentMatGroup}$ for groups defined over
$\mathbb{Q}$ also includes an algorithm ${\tt
IsNilpotentMatGroupFF}$ for testing nilpotency over finite fields,
according to Subsection~\ref{firstuseofabseries}. To construct
congruence subgroups, ${\tt IsNilpotentMatGroup}$ uses some
functions from `Polenta'~\cite{Polenta}.

Table~\ref{onlytable} below samples performance of ${\tt
IsNilpotentMatGroup}$ for various input parameters: degree; size
of the field if finite, or size of generator entries if the field
is $\mathbb{Q}$; number of generators. The last column of
Table~\ref{onlytable} gives CPU time in the format minutes :
seconds : milliseconds. The computations were done on a Pentium 4
with 1.73 GHz under Windows, using {\sf GAP} 4. The standard {\sf
GAP} function ${\tt IsNilpotent}$ failed for all groups in
Table~\ref{onlytable}.

\begin{table}[h]
\begin{center}
\begin{tabular}{|c|c|c|c|c|}\hline
Group & Degree & Field & No. generators & Runtime  \\
\hline \hline $G_1$ & 9 & $5^6$  & 6  & $0:00:26.890$\\  \hline
$G_2$  & 127  & $2^7$  & 3 & $0:18:37.092$\\  \hline $G_3$  & 12 &
$5^6$ & 9  & $0:12:35.592$ \\  \hline $G_4$  & 30  & $11^4$  & 9 &
$0:17:39.749$\\  \hline $G_5$  & 63  & $2^6$  & 11  &
$0:18:25.842$ \\  \hline $G_6$  & 90  & $2^8$  & 54  &
$0:21:28.280$ \\  \hline $G_7$  & 96  & $5^4$  & 63  &
$0:35:27.546$  \\  \hline $G_8$  & 120 & $11^4$  & 27  &
$1:07:17.702$\\  \hline $G_9$  & 100  & $\mathbb{Q}$  & 12  &
$0:08:39.641$\\  \hline $G_{10}$  & 200  & $\mathbb{Q}$  & 27  &
$0:09:16.344$ \\  \hline $G_{11}$  & 128  & $\mathbb{Q}$  & 93  &
$0:14:07.398$ \\  \hline $G_{12}$  & 150 & $13^3$  & 2  &
$0:00:23.688$
\\  \hline
$G_{13}$  & 350 & $\mathbb{Q}$  & 4  & $0:00:55.047$
\\  \hline
$G_{14}$  & 25 & $\mathbb{Q}$  & 13  & $0:16:25.859$
\\  \hline
\end{tabular}
\end{center}

\vspace*{2.5mm}

\caption{Running times for nilpotency testing
algorithms}\label{onlytable}
\end{table}

As one might expect, the most challenging input groups are the
nilpotent groups, because for these we pass through all stages 
of the algorithms. 
On the other hand, if the input is not
nilpotent, then this is confirmed very quickly. For example, if
the input does not have an abelian series---if it
is not solvable---then the algorithm terminates at the ${\tt
TestSeries}$ stage (see Subsection~\ref{firstuseofabseries}).

Thus, for proper testing of our algorithms, we need an extensive
set of examples of nilpotent matrix groups. Constructing special
classes of nilpotent matrix groups is a problem of interest in its
own right. We have implemented an algorithm, ${\tt
MaximalAbsolutelyIrreducibleNilpotent}$- ${\tt MatGroup}(n,p,l)$,
that constructs absolutely irreducible maximal nilpotent subgroups
of $\mathrm{GL}(n,p^l)$.
If $r$ divides $p^l-1$ for each prime divisor $r$ of $n$
then such a subgroup of $\mathrm{GL}(n,p^l)$ is unique up to
conjugacy; otherwise, such subgroups do
not exist (see \cite[Chapter 7]{Suprunenko2}). If $n=r^a$ and 
$r$ divides $p^l-1$ then ${\tt
MaximalAbsolutelyIrreducibleNilpotentMatGroup}(n,p,l)$ returns the
group generated by a Sylow $r$-subgroup of $\mathrm{GL}(n,p^l)$,
and all scalars. For other $n$, this algorithm returns the
group generated by the scalars and a Kronecker product of Sylow
$r_i$-subgroups of $\mathrm{GL}(r_i^{a_i},p^l)$,
$n=\prod_{i=1}^kr_i^{a_i}$.

To check steps which rely on the Jordan decomposition, we
implemented another procedure, ${\tt ReducibleNilpotentMatGroup}$.
This procedure returns reducible but not completely reducible
nilpotent groups over finite fields and $\mathbb{Q}$.

The groups $G_i$ in Table~\ref{onlytable} for $i\leq 5$ are
absolutely irreducible nilpotent groups constructed by ${\tt
MaximalAbsolutelyIrreducibleNilpotentMatGroup}$.  The reducible
groups $G_6, G_7, G_8$, and $G_9, G_{10}, G_{11}$, are constructed
by ${\tt ReducibleNilpotentMatGroup}$. Finally, $G_{12}$,
$G_{13}$, and $G_{14}$ are non-nilpotent groups;
$G_{12}=\mathrm{GL}(150,13^3)$,
$G_{13}=\mathrm{GL}(350,\mathbb{Z})$, and $G_{14}$ is the group
${\tt POL}\! \underline{\hspace*{2.5mm}}{\tt PolExamples2(40)}$
from `Polenta', an infinite solvable subgroup of $\mathrm{GL}(25,
\mathbb{Q})$.

Other functions in `Nilmat' decide
finiteness, compute orders of finite nilpotent groups, find
the Sylow system of a nilpotent group over a finite field, and
test whether a nilpotent group is completely reducible. 
Additionally `Nilmat' contains a library of the nilpotent
primitive groups over finite fields (based on \cite{DFclass}).




\subsection*{Acknowledgment}
We are immensely indebted to Professor Bettina Eick for her
hospitality during our visits to Technische Universit\"{a}t
Braunschweig, for fruitful conversations then and afterwards, and
moreover for her very generous assistance with the {\sf GAP} 
implementation.



\newpage

\bibliographystyle{amsplain}


\end{document}